\newtheorem{thm}{Theorem}
\newtheorem{clm}{Claim}
\newtheorem{crl}{Corollary}
\newcommand{\E}{\mathbb{E}}
\newcommand{\sF}{\mathcal{F}}
\newcommand{\sL}{\mathcal{L}}
\newcommand{\sT}{\mathcal{T}}
\newcommand{\1}{\mathbf{1}}
\DeclarePairedDelimiter{\abs}{\lvert}{\rvert}  
\DeclarePairedDelimiterX{\scprd}[2]{\langle}{\rangle}{#1,#2}
\DeclarePairedDelimiterX{\set}[2]{\{}{\}}{\,#1 \mathrel{}\mathclose{}\delimsize|\mathopen{}\mathrel{} #2\,}
\newcommand*{\vsch}[2]{\genfrac{\{}{\}}{0pt}{}{#1}{#2}}
\newcommand*{\stak}[2]{\genfrac{}{}{0pt}{}{#1}{#2}}
\newcommand{\res}{\kern -0.1em\mathpunct{\upharpoonright}}
\title{Two-orbit convex polytopes and tilings}
\author[Matteo]{Kolya Matteo}
\address{Department of Mathematics, 567 Lake Hall\\Northeastern University\\Boston, MA 02115}
\email{matteo.n@husky.neu.edu}
\keywords{Two-orbit, convex polytopes, tilings, half-regular, quasiregular}
\subjclass[2010]{Primary 52B15; Secondary 51M20, 51F15, 52C22}
\date{\today}
\begin{document}
\begin{abstract}
We classify the convex polytopes whose symmetry groups have two
orbits on the flags. These exist only in two or three dimensions,
and the only ones whose combinatorial automorphism group
is also two-orbit are the cuboctahedron, the icosidodecahedron,
and their duals.
The combinatorially regular two-orbit convex polytopes are certain $2n$-gons
for each $n \geq 2$.
We also classify the face-to-face
tilings of Euclidean space by convex polytopes whose symmetry groups have two flag orbits. There are finitely many families, tiling one, two, or three dimensions.
The only such tilings
which are also combinatorially two-orbit are the trihexagonal plane tiling,
the rhombille plane tiling, the tetrahedral-octahedral honeycomb,
and the rhombic dodecahedral honeycomb.
\end{abstract}
\maketitle

\section{Introduction}
Here we will classify all convex polytopes, and face-to-face tilings of Euclidean space by convex polytopes, whose flags have two orbits under the action of the symmetry group. First we briefly define these terms.

A \emph{convex polytope} is the convex hull of a finite set of points in $d$-dimensional Euclidean space $\E^d$ \cite{grunbaum1967convex}.
In this paper we use ``$d$-polytope'' to mean ``$d$-dimensional convex polytope,''
``polygon'' to mean ``2-polytope'' and ``polyhedron'' to mean ``3-polytope.''
A \emph{face} of a convex polytope $P$ is the intersection of $P$ with a \emph{supporting hyperplane} of $P$, i.e.\ a hyperplane $H$ such that $P$ is contained in one closed half-space determined by $H$, and such that $H$ and $P$ have non-empty intersection.
We also admit the empty set and $P$ itself as ``improper'' faces.
A face is called $j$-dimensional, or a \emph{$j$-face}, if its affine hull is $j$-dimensional; the empty face is $(-1)$-dimensional.
The 0-faces are also called \emph{vertices}; 1-faces are also called \emph{edges};
$(d-2)$-faces may be called \emph{ridges} and $(d-1)$-faces are called \emph{facets}.

The faces of $P$, ordered by containment, form a lattice $\sL(P)$, the \emph{face lattice} of $P$.
The \emph{symmetry group} of $P$, denoted $G(P)$, is the set of Euclidean isometries which carry $P$ to itself.
The \emph{automorphism group} of $P$, denoted $\Gamma(P)$, is the set of lattice isomorphisms from $\sL(P)$ to itself.
Since each transformation in $G(P)$ acts as an automorphism of $\sL(P)$, we can consider $G(P)$ as a subgroup of $\Gamma(P)$.

A maximal chain in $\sL(P)$ (i.e.\ a maximal linearly ordered set of faces) is called a \emph{flag} (due to the way a vertex, followed by an edge incident to that vertex, followed by a 2-face incident to the edge, resemble the construction of a flagpole.) The set of all flags of $P$ is $\sF(P)$. Transformations in $G(P)$ (or automorphisms in $\Gamma(P)$) induce an action on $\sF(P)$ in an obvious way. The orbits of flags under the action of $G(P)$ are called \emph{flag orbits}, and a polytope with $n$ distinct flag orbits is called an \emph{$n$-orbit polytope}.
Similarly,
orbits of flags under the action of $\Gamma(P)$ are called \emph{combinatorial flag orbits}, and a polytope with $n$ such orbits is called \emph{combinatorially $n$-orbit};
in the context of abstract polytopes, this is the only definition possible
and the adjectives may be dropped.

In \cite[273]{conway2008symmetries}, Conway et al. introduce the term \emph{flag rank} for the number of flag orbits. A $k$-orbit polytope is said to have a flag rank of $k$,
and they also suggest that such a polytope be called $\frac{1}{k}$-regular.
Thus, in this paper we determine all the half-regular convex polytopes.

One-orbit polytopes are the regular polytopes.
It is well known \cite{coxeter1973regular}
that there are infinitely many regular polygons,
namely the regular $n$-gon for each $n \geq 3$;
there are five regular polyhedra,
the Platonic solids;
there are six regular 4-polytopes;
and there are three regular $d$-polytopes for all $d > 4$.

As far as flags are concerned, two-orbit polytopes are as close to regular as possible while not being regular.
Two-orbit convex polytopes can either be combinatorially two-orbit, if $G(P) = \Gamma(P)$,
or combinatorially regular,
in which case $G(P)$ is a subgroup of index 2 in $\Gamma(P)$.
In the more general case of abstract polytopes, 
combinatorially two-orbit polyhedra were examined by \textcite{hubard2010two}.
The \emph{chiral polytopes} are notable examples of two-orbit abstract polytopes \cite{chiral}.
However, convex polytopes cannot be chiral \cite[496]{chiral}.

\begin{figure}[h]
\begin{tabular}{ccccc}
\begin{tikzpicture}[baseline=0.5cm]
\draw (0,1) -- (2,1) -- (2,0) -- (0,0) -- cycle;
\end{tikzpicture}
&&
\begin{tikzpicture}[baseline=0.8cm]
\draw (0,0) -- ++(90:.8cm) coordinate (A)
	 -- ++(90:.8cm) -- ++ (150:.4cm) coordinate (B)
	 -- ++(150:.4cm) -- ++(210:.8cm) coordinate (C)
	 -- ++(210:.8cm) -- ++(270:.4cm) coordinate (D)
	 -- ++(270:.4cm) -- ++(330:.8cm) coordinate (E)
	 -- ++(330:.8cm) -- ++(30:.4cm) coordinate (F)
	 -- ++(30:.4cm);
\end{tikzpicture}
&&
\begin{tikzpicture}[baseline=0.7cm,scale=0.95]
\draw [turtle=home,fd=1.4cm,
		left=45,fd=.8cm,
		left=45,fd=1.4cm,
		left=45,fd=.8cm,
		left=45,fd=1.4cm,
		left=45,fd=.8cm,
		left=45,fd=1.4cm,
		left=45,fd=.8cm];
\end{tikzpicture}
\\[16mm]
\begin{tikzpicture}[baseline=0.5cm,scale=1.2]
\draw (0,0.5) -- (1,1) -- (2,0.5) -- (1,0) -- cycle;
\end{tikzpicture}
&\phantom{blabla}&
\begin{tikzpicture}[baseline=0.7cm,scale=1.1]
\path (0,0) -- ++(90:.8cm) coordinate (A)
	 -- ++(90:.8cm) -- ++ (150:.4cm) coordinate (B)
	 -- ++(150:.4cm) -- ++(210:.8cm) coordinate (C)
	 -- ++(210:.8cm) -- ++(270:.4cm) coordinate (D)
	 -- ++(270:.4cm) -- ++(330:.8cm) coordinate (E)
	 -- ++(330:.8cm) -- ++(30:.4cm) coordinate (F)
	 -- ++(30:.4cm);
\draw (A) -- (B) -- (C) -- (D) -- (E) -- (F) -- cycle;
\end{tikzpicture}
&\phantom{blabla}&
\begin{tikzpicture}[baseline=0.7cm]
\draw (-2.5,.7) -- (-2.23,1.68) -- (-1.25,1.95) -- (-.28,1.68) --
		(0,.7) -- (-.28,-.28) -- (-1.25,-.55) -- (-2.23,-.28) -- cycle;
\end{tikzpicture}
\end{tabular}
\caption{The first few two-orbit convex polygons, in pairs of duals}\label{fig:polygons}
\end{figure}
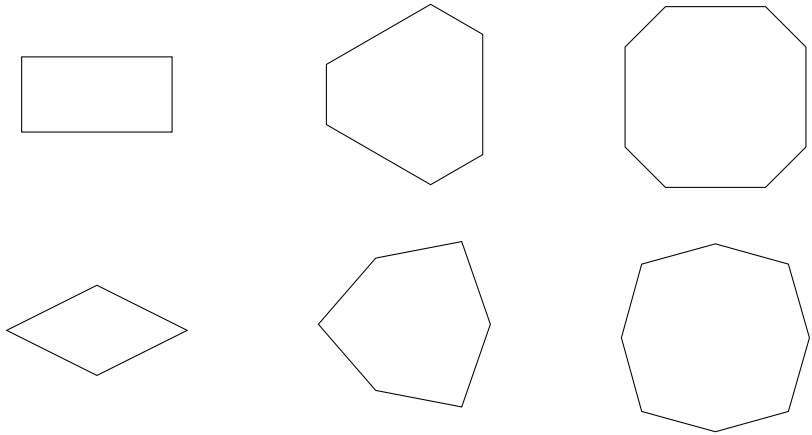

As we shall show, two-orbit convex polytopes turn out to be even scarcer than one-orbit convex polytopes, and exist only in two or three dimensions. There are infinitely many in two dimensions.
For each $n \geq 2$, a two-orbit $2n$-gon may be constructed by alternating edges of two distinct lengths, with the same angle at each vertex (namely the interior angle of a regular $2n$-gon, $\frac{n-2}{n}\pi$).
Dual to each of these is another type of two-orbit $2n$-gon, with uniform edge lengths, but alternating angle measures.

In three dimensions, there are just four:
The cuboctahedron,
its dual the rhombic dodecahedron,
the icosidodecahedron,
and its dual the rhombic triacontahedron.
We summarize the results in Theorems \ref{thm:topes} and \ref{thm:tilings}.

\begin{thm}\label{thm:topes}
There are no two-orbit $d$-polytopes if $d \geq 4$ (or if $d \leq 1$).
There are exactly four, if $d = 3$:
the cuboctahedron, icosidodecahedron, rhombic dodecahedron, and rhombic triacontahedron.
If $d = 2$, there are two infinite series of $2n$-gons,
for each $n \geq 2$. Polygons of one series alternate between two distinct edge lengths.
Polygons of the other alternate between two distinct angle measures.
\end{thm}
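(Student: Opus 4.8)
The plan is to recast ``two-orbit'' as a subset $S$ of the index set $\{0,\dots,d-1\}$ recording which flag-adjacencies change the flag orbit, and then treat $d\le1$, $d=2$, $d=3$, and $d\ge4$ in turn. Let $P$ be a two-orbit $d$-polytope with flag orbits $\sF_0,\sF_1$ under $G=G(P)$; since flag stabilizers are trivial, $\abs{\sF_0}=\abs{\sF_1}=\abs{G}$. For each $i$ the involution $\Phi\mapsto\Phi^i$ sending a flag to its $i$-adjacent neighbour commutes with $G$, hence either fixes both orbits or interchanges them; let $S$ be the set of indices for which it interchanges them. Flag-connectedness gives $S\neq\emptyset$, and the non-existence of chiral convex polytopes \cite[496]{chiral} gives $S\neq\{0,\dots,d-1\}$. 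The first lemma I would establish is that $G$ is transitive on the $i$-faces of $P$ whenever $i\notin S$: partitioning flags by the $G$-orbit of their $i$-face yields a $G$-invariant partition, hence at most two classes, and two classes would --- via connectedness of the graph whose vertices are the $i$-faces, with $F$ joined to $F'$ when some flag through $F$ is $i$-adjacent to a flag through $F'$ --- produce a pair of $i$-adjacent flags whose $i$-faces lie in different classes, against $i\notin S$. Passing to the dual polytope replaces $S$ by $\{d-1-i:i\in S\}$, so I may treat $S$ up to this involution. For $d\le1$ there is no proper non-empty $S$, and points and segments are indeed regular.

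\emph{The case $d=2$.} Here $S$ is $\{1\}$ or $\{0\}$, interchanged by duality. If $S=\{1\}$ then $G$ is vertex-transitive, so all interior angles are equal; the edge count is even, say $2n$ (the two edge orbits --- two, since $1\in S$ --- alternate around the boundary), the common angle is $\frac{n-1}{n}\pi$, and the two orbits give two alternating edge lengths. Conversely, for any $n\ge2$ and any ratio the resulting $2n$-gon closes up because the imposed $D_n$-symmetry makes its edge vectors sum to zero, is convex because its angles are $<\pi$, and admits no larger symmetry group because no isometry can interchange edges of different lengths --- so it is exactly two-orbit. The case $S=\{0\}$ is the dual family of equilateral $2n$-gons with two alternating angles $\alpha\neq\beta$, $\alpha+\beta=2\frac{n-1}{n}\pi$ (both $<\pi$), handled identically. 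These are the two advertised series.

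\emph{The case $d=3$.} Up to duality $S$ is $\{1\}$, $\{0,2\}$, $\{0,1\}$, or $\{2\}$. The local input is the orbit pattern along the flag cycle around a vertex and around a $2$-face: each such cycle alternates two adjacency types, so the flag orbit flips a number of times per loop dictated by $S$, forcing parity conditions, while the transitivity lemma supplies the global symmetry. If $S=\{1\}$, then $G$ is vertex- and face-transitive with all valences $2m$ and all face sizes $2s$, $m,s\ge2$, so Euler's formula gives $E\bigl(\frac1m+\frac1s-1\bigr)=2$, impossible. If $S=\{0,2\}$, then every valence and face size is $\ge4$, so $V\le E/2$ and $F\le E/2$, again contradicting Euler. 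If $S=\{0,1\}$ (hence also $\{1,2\}$), then $G$ is face-transitive and also vertex-transitive (two vertex orbits are impossible since every vertex meets flags of both orbits), and the induced flag structure on a $2$-face $f$ with $\ell$ sides forces its stabilizer $G_f$ to act on $f$ as a rotation-only group of order $\ell$ (a reflection of $f$ would swap the two flag classes), so $f$ carries all of $C_\ell$ among its symmetries and is a regular polygon; thus $P$ is a vertex- and face-transitive convex polyhedron with congruent regular faces, so of combinatorial type $\{\ell,k\}$, so a Platonic solid by Cauchy's rigidity theorem --- contradicting two orbits. If $S=\{2\}$, then $G$ is vertex- and edge-transitive, the faces around each vertex alternate between the two face orbits, and the vertex figure is $(p\cdot q)^m$; bounding the vertex angle sum by $2\pi$ forces $m=2$, whence $\{p,q\}$ is $\{3,4\}$ or $\{3,5\}$ once $\{3,3\}$ is discarded (it would give the regular octahedron, the only vertex-transitive convex polyhedron of that combinatorial type); since edge-transitivity makes the triangular faces equilateral and the quadrilateral faces square, Cauchy's theorem identifies these with the cuboctahedron and the icosidodecahedron, and dually $S=\{0\}$ gives their duals, the rhombic dodecahedron and the rhombic triacontahedron. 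So the four listed polyhedra are the only rank-$3$ examples. I expect the mixed types $\{0,1\}$ and $\{1,2\}$ to be the main obstacle, since Euler's formula alone does not kill them and one must first force the faces to be regular and then invoke rigidity; the surviving cases $\{0\},\{2\}$ likewise need a rigidity argument to pin down the metric.

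\emph{The case $d\ge4$.} I would induct on $d$. For a two-orbit $d$-polytope $P$, the $\sF_0$-flags through a given facet $F$ form a single $G_F$-orbit (and likewise the $\sF_1$-flags), and $G_F$ embeds in $G(F)$, so $F$ is regular or a two-orbit $(d-1)$-polytope, and dually for vertex figures. When $d=4$ the only non-regular candidate facets are the four polyhedra found above, but each has all dihedral angles at least $120^\circ$, so three of them cannot surround a ridge of a convex $4$-polytope; hence every facet of $P$, and (applying the same to the dual) every vertex figure, is regular. When $d\ge5$ the inductive hypothesis already forbids two-orbit $(d-1)$-faces, so again all facets and vertex figures are regular. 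In either case the standard fact that a convex polytope all of whose facets and vertex figures are regular is itself regular contradicts $P$ being two-orbit; hence there are no two-orbit $d$-polytopes for $d\ge4$.
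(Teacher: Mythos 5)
Your route is genuinely different from the paper's. The paper first shows (via McMullen's theorem that fully transitive convex polytopes are regular, together with a flag-connectivity argument) that a two-orbit polytope is intransitive on exactly one rank $j$, then that $j\in\{0,d-1\}$ by applying Euler's formula to a rank-$3$ section, and finally leans on the classifications of quasiregular polyhedra and of Gosset's semiregular polytopes. You instead stratify by the set $S$ of orbit-swapping adjacencies, kill the bad classes in rank $3$ by direct Euler counts, pin down the metric with Cauchy rigidity, and in rank $\geq 4$ replace the semiregular classification by a dihedral-angle obstruction plus ``regular facets and regular vertex figures imply regular.'' Where it works, this is more self-contained; but it has a gap in the step that carries the most weight.

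The gap is in $d=4$. First, a terminological error that matters: by the diamond condition a ridge of a $4$-polytope lies in exactly \emph{two} facets, so ``three of them cannot surround a ridge'' is not the relevant convexity condition; you mean that the dihedral angles of the cells around a common \emph{edge} ($1$-face) sum to less than $2\pi$. More seriously, that inequality only bites when at least three of the cells at such an edge are two-orbit polyhedra. If $P$ has two facet orbits, one consisting of regular cells and one of, say, cuboctahedra, then non-congruent facets alternate around each edge and only half the cells there are ``fat,'' so your bound $3\times 120^\circ\geq 2\pi$ is never invoked; two cuboctahedra and two tetrahedra at an edge is not excluded by what you wrote (it is excluded by $2\times 125.26^\circ+2\times 70.53^\circ>360^\circ$, but to get there you must first observe that the alternation forces an even number of cells, hence at least two of each type, and then redo the estimate for each possible regular partner). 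The paper sidesteps this entirely by proving that intransitivity occurs in at most one rank and that this rank is $0$ or $d-1$, whence a facet-intransitive polytope automatically has regular facets and a facet-transitive one has congruent facets; you should either import that lemma or complete the mixed-type angle computation. Two smaller repairs: (i) excluding $S=\{0,\dots,d-1\}$ by citing the non-existence of chiral convex polytopes is not quite legitimate, since your $S$ is defined relative to $G(P)$ while chirality concerns $\Gamma(P)$; the case where $P$ is combinatorially regular and $G(P)$ is an index-two ``rotation'' subgroup is not covered by that citation, though it is covered by noting that $S=\{0,\dots,d-1\}$ forces every $i$-face to lie in flags of both orbits, hence full transitivity, hence regularity by McMullen's theorem. (ii) In the case $S=\{2\}$, edge-transitivity makes the quadrilateral faces rhombi, not squares; to get regular faces before invoking Cauchy you need the fact that $0,1\notin S$ makes the stabilizer of a $2$-face act transitively on its flags --- the same stabilizer argument you already use for $S=\{0,1\}$.
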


\begin{figure}[h]
\begin{tabular}{ccc}
\begin{tikzpicture}[scale=1.4,very thick,line join=bevel,baseline]
\coordinate (a) at (0,1,1);
\coordinate (b) at (1,0,1);
\coordinate (c) at (0,-1,1);
\coordinate (d) at (-1,0,1);
\coordinate (e) at (1,1,0);
\coordinate (f) at (1,-1,0);
\coordinate (g) at (-1,-1,0);
\coordinate (h) at (-1,1,0);
\coordinate (i) at (0,1,-1);
\coordinate (j) at (1,0,-1);
\coordinate (k) at (0,-1,-1);
\coordinate (l) at (-1,0,-1);
\draw (a) -- (b) -- (c) -- (d) -- cycle; 
\draw (a) -- (e) -- (i) -- (h) -- cycle; 
\draw (h) -- (d) -- (g); 
\draw[thin,gray] (g) -- (l) -- (h); 
\draw (b) -- (e) -- (j) -- (f) -- cycle; 
\draw (f) -- (c) -- (g); 
\draw[thin,gray] (g) -- (k) -- (f); 
\draw[thin,gray] (i) -- (j) -- (k) -- (l) -- cycle; 
\end{tikzpicture}
&&
\begin{tikzpicture}[very thick, scale=1.2,line join=bevel,baseline]
\coordinate (v1) at  ( 0,     0,     1.618034);
\coordinate (v7) at  (  .5,    .809017, 1.309017);
\coordinate (v9) at  (  .5,   -.809017, 1.309017);
\coordinate (v11) at ( -.5,    .809017, 1.309017);
\coordinate (v13) at ( -.5,   -.809017, 1.309017);
\coordinate (v23) at ( 1.309017,  .5,    .809017);
\coordinate (v25) at ( 1.309017, -.5,    .809017);
\coordinate (v27) at (-1.309017,  .5,    .809017);
\coordinate (v29) at (-1.309017, -.5,    .809017);
\coordinate (v15) at (  .809017, 1.309017,  .5);
\coordinate (v17) at (  .809017,-1.309017,  .5);
\coordinate (v19) at ( -.809017, 1.309017,  .5);
\coordinate (v21) at ( -.809017,-1.309017,  .5);
\coordinate (v3) at  ( 0,     1.618034, 0);
\coordinate (v4) at  ( 0,    -1.618034, 0);
\coordinate (v5) at  ( 1.618034, 0,     0);
\coordinate (v6) at  (-1.618034, 0,     0);
\coordinate (v16) at (  .809017, 1.309017, -.5);
\coordinate (v18) at (  .809017,-1.309017, -.5);
\coordinate (v20) at ( -.809017, 1.309017, -.5);
\coordinate (v22) at ( -.809017,-1.309017, -.5);
\coordinate (v24) at ( 1.309017,  .5,   -.809017);
\coordinate (v26) at ( 1.309017, -.5,   -.809017);
\coordinate (v28) at (-1.309017,  .5,   -.809017);
\coordinate (v30) at (-1.309017, -.5,   -.809017);
\coordinate (v8) at  (  .5,    .809017,-1.309017);
\coordinate (v10) at (  .5,   -.809017,-1.309017);
\coordinate (v12) at ( -.5,    .809017,-1.309017);
\coordinate (v14) at ( -.5,   -.809017,-1.309017);
\coordinate (v2) at  ( 0,     0,-1.618034);
\draw (v1) -- (v7) -- (v23) -- (v25) -- (v9) -- (v1);
\draw (v1) -- (v11) -- (v27) -- (v29) -- (v13) -- (v1);
\draw (v15) -- (v7) -- (v11) -- (v19) -- (v3) -- (v15);
\draw (v9) -- (v13) -- (v21) -- (v4) -- (v17) -- (v25);
\draw (v15) -- (v23) -- (v5) -- (v24) -- (v16) -- (v15);
\draw[thin,gray] (v2) -- (v8) -- (v24) -- (v26) -- (v10) -- (v2);
\draw[thin,gray] (v2) -- (v12) -- (v28) -- (v30) -- (v14) -- (v2);
\draw (v16) -- (v3) -- (v20);
\draw[thin,gray] (v20) -- (v12) -- (v8) -- (v16);
\draw (v25) -- (v5) -- (v26) -- (v18) -- (v17) -- (v9);
\draw (v21) -- (v29);
\draw[thin,gray] (v29) -- (v6) -- (v30) -- (v22) -- (v21);
\draw (v27) -- (v19) -- (v20);
\draw[thin,gray] (v20) -- (v28) -- (v6) -- (v27);
\draw[thin,gray] (v10) -- (v14) -- (v22) -- (v4) -- (v18) -- (v10);
\end{tikzpicture}
\\
\vphantom{$\dfrac 1 1$}
Cuboctahedron && Icosidodecahedron
\\[1mm]
\begin{tikzpicture}[very thick,z={(-2mm,-4mm)},scale=1.1,line join=bevel,baseline]
\coordinate (a) at (1,1,1);
\coordinate (b) at (1,-1,1);
\coordinate (c) at (-1,-1,1);
\coordinate (d) at (-1,1,1);
\coordinate (e) at (1,1,-1);
\coordinate (f) at (1,-1,-1);
\coordinate (g) at (-1,-1,-1);
\coordinate (h) at (-1,1,-1);
\coordinate (i) at (0,0,2);
\coordinate (j) at (0,2,0);
\coordinate (k) at (2,0,0);
\coordinate (l) at (0,-2,0);
\coordinate (m) at (-2,0,0);
\coordinate (n) at (0,0,-2);
\draw (i) -- (a) -- (j) -- (d) -- (i);
\draw (a) -- (k) -- (b) -- (i);
\draw (i) -- (c) -- (l) -- (b);
\draw (c) -- (m) -- (d);
\draw (k) -- (e) -- (j);
\draw (e) -- (j) -- (h);
\draw[thin,gray] (h) -- (n) -- (e);
\draw[thin,gray] (k) -- (f) -- (l);
\draw[thin,gray] (l) -- (g) -- (m);
\draw (m) -- (h);
\draw[thin,gray] (g) -- (n) -- (f);
\end{tikzpicture}
&\phantom{blablablabla}&
\begin{tikzpicture}[very thick,scale=0.45,line join=bevel,baseline]
\coordinate (v2) at (2.618034, 0, 4.236068); 
\coordinate (v4) at (0, 1.618034, 4.236068); 
\coordinate (v6) at (-2.618034, 0, 4.236068); 
\coordinate (v8) at (0, -1.618034, 4.236068); 
\coordinate (v11) at (2.618034, 2.618034, 2.618034); 
\coordinate (v12) at (0, 4.236068, 2.618034); 
\coordinate (v13) at (-2.618034, 2.618034, 2.618034); 
\coordinate (v16) at (-2.618034, -2.618034, 2.618034); 
\coordinate (v17) at (0, -4.236068, 2.618034); 
\coordinate (v18) at (2.618034, -2.618034, 2.618034); 
\coordinate (v20) at (4.236068, 0, 1.618034); 
\coordinate (v23) at (-4.236068, 0, 1.618034); 
\coordinate (v27) at (4.236068, 2.618034, 0); 
\coordinate (v28) at (1.618034, 4.236068, 0); 
\coordinate (v30) at (-1.618034, 4.236068, 0); 
\coordinate (v31) at (-4.236068, 2.618034, 0); 
\coordinate (v33) at (-4.236068, -2.618034, 0); 
\coordinate (v34) at (-1.618034, -4.236068, 0); 
\coordinate (v36) at (1.618034, -4.236068, 0); 
\coordinate (v37) at (4.236068, -2.618034, 0); 
\coordinate (v38) at (4.236068, 0, -1.618034); 
\coordinate (v41) at (-4.236068, 0, -1.618034); 
\coordinate (v45) at (2.618034, 2.618034, -2.618034); 
\coordinate (v46) at (0, 4.236068, -2.618034); 
\coordinate (v47) at (-2.618034, 2.618034, -2.618034); 
\coordinate (v50) at (-2.618034, -2.618034, -2.618034); 
\coordinate (v51) at (0, -4.236068, -2.618034); 
\coordinate (v52) at (2.618034, -2.618034, -2.618034); 
\coordinate (v54) at (2.618034, 0, -4.236068); 
\coordinate (v56) at (0, 1.618034, -4.236068); 
\coordinate (v58) at (-2.618034, 0, -4.236068); 
\coordinate (v60) at (0, -1.618034, -4.236068);
\draw (v2) -- (v4) -- (v6) -- (v8) -- (v2)
      (v2) -- (v11) -- (v12)
      (v4) -- (v12) -- (v13) -- (v6) -- (v23)
      (v6) -- (v16) -- (v17) -- (v8)
      (v17) -- (v18)
      (v2) -- (v18)
      (v2) -- (v20) -- (v27) -- (v28)
      (v12) -- (v28)
      (v12) -- (v30)
      (v13) -- (v31)
      (v23) -- (v31)
      (v23) -- (v33)
      (v33) -- (v16)
      (v18) -- (v37) -- (v20)
      (v11) -- (v27)
      (v45) -- (v46)
      (v38) -- (v27) -- (v45)
      (v37) -- (v38)
      (v28) -- (v46)
      (v30) -- (v46)
      (v30) -- (v31)
      (v17) -- (v36)
      (v36) -- (v37);
\draw[thin,gray] (v54) -- (v45)
      (v46) -- (v56)
      (v41) -- (v58) -- (v47)
      (v52) -- (v54) -- (v38)
      (v46) -- (v47) -- (v31) -- (v41) -- (v33) -- (v50) -- (v51) -- (v52) -- (v37)
      (v54) -- (v56) -- (v58) -- (v60) -- (v54)
      (v36) -- (v51) -- (v34) -- (v17)
      (v60) -- (v51)
      (v58) -- (v50)
      (v33) -- (v34);
\end{tikzpicture}\\
\vphantom{$\dfrac 1 1$}
Rhombic Dodecahedron && Rhombic Triacontahedron
\end{tabular}
\caption{The two-orbit convex polyhedra}\label{fig:polyhedra}
\end{figure}
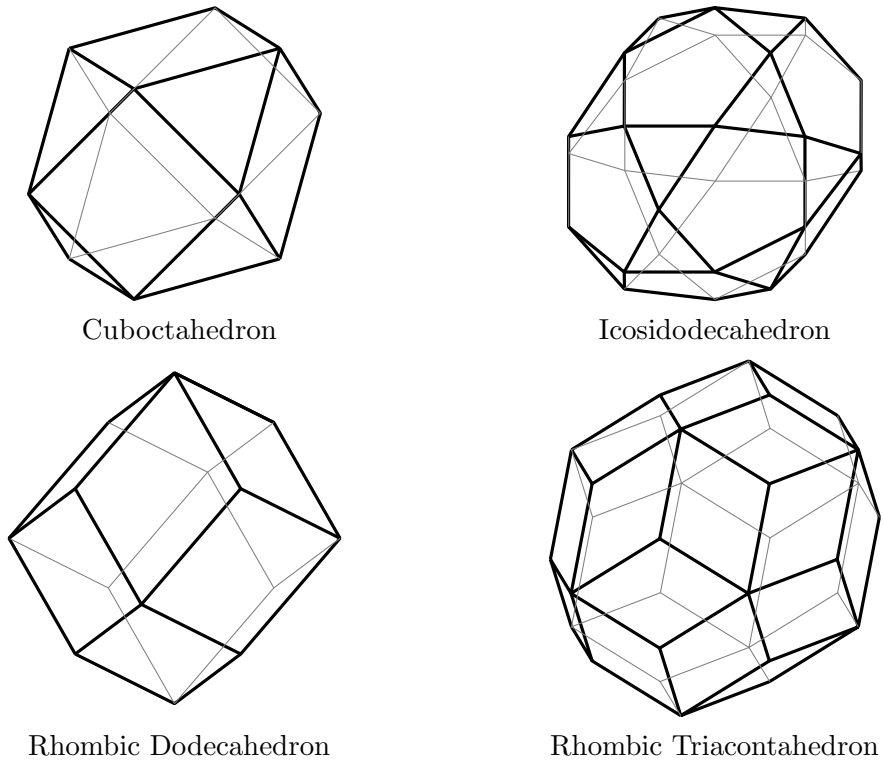

In Section~\ref{sec:tilings} we classify all two-orbit tilings by convex polytopes.
We consider only face-to-face, locally finite tilings.
See Section~\ref{sec:tilings} for a description of each of the named tilings.

\begin{thm}\label{thm:tilings}
There are no two-orbit tilings of $\E^d$ if $d \geq 4$ (or if $d = 0$).
If $d = 1$, there is one family:
an apeirogon alternating between two distinct edge lengths.
If $d = 2$, there are four:
the trihexagonal tiling (3.6.3.6);
its dual, the rhombille tiling;
a family of tilings by translations of a rhombus;
and a family of tilings by rectangles.
If $d = 3$, there are two:
the tetrahedral-octahedral honeycomb
and its dual, the rhombic dodecahedral honeycomb.
\end{thm}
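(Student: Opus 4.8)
The plan is to derive the classification from that of two-orbit polytopes (Theorem~\ref{thm:topes}) through the local structure of a tiling. Let $\sT$ be a two-orbit tiling of $\E^d$ and $G = G(\sT)$. A complete flag of $\sT$ contains a tile, a $d$-polytope that spans $\E^d$ affinely, so an isometry fixing a flag fixes $\E^d$ pointwise: thus $G$ acts freely on $\sF(\sT)$, and the two flag orbits define a $G$-invariant colouring of $\sF(\sT)$ in two colours. For $0 \le i \le d$ the $i$-adjacency $\sigma_i$ --- which exchanges the $i$-face of a flag for the unique admissible alternative (for $i = d$, passing to the tile on the other side of a common facet) --- is a fixed-point-free, $G$-equivariant involution of $\sF(\sT)$, hence either preserves or interchanges the two colours. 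Let $I \subseteq \{0,\dots,d\}$ be the set of indices where it preserves them; $\sT$ is regular exactly when $I = \{0,\dots,d\}$, so $I$ omits at least one index. The argument is a case analysis on $I$; note that the colouring restricts to a two-colouring of the flags of every section of $\sT$, since the adjacencies internal to a section either all preserve colour there or not.

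Fix a tile $T$. Every $g \in G$ with $g(T) = T$ restricts to a symmetry of the polytope $T$, so $\mathrm{Stab}_G(T) \le G(T)$; and if $g$ carries one flag through $T$ to another then $g$ fixes their common tile $T$, so $g \in \mathrm{Stab}_G(T)$. Hence the flags of $T$, which are in bijection with the flags of $\sT$ through $T$, fall into as many $\mathrm{Stab}_G(T)$-orbits as the number of colours appearing there, namely at most two; a fortiori they fall into at most two $G(T)$-orbits. So every tile is a one- or two-orbit convex $d$-polytope, genuinely two-orbit precisely when $\sigma_i$ interchanges colours for some $i \le d-1$. The same applies to the vertex figure at every vertex --- a one- or two-orbit convex (or spherical) $(d-1)$-polytope, genuinely two-orbit precisely when $\sigma_i$ interchanges colours for some $i \ge 1$ --- and to each section $F_j/F_i$, genuinely two-orbit precisely when $I$ omits an index strictly between $i$ and $j$. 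Combining this with Theorem~\ref{thm:topes} and the fact that the combinatorially regular two-orbit convex polytopes are only $2n$-gons, the sections that can occur form a very short list.

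Since $I$ omits an index, either every tile or every vertex figure is genuinely two-orbit, forcing $d \le 3$ or $d - 1 \le 3$. If $d \ge 5$ no tile can be two-orbit, so $I = \{0,\dots,d-1\}$, and then the vertex figures would be two-orbit $(d-1)$-polytopes with $d-1 \ge 4$, impossible by Theorem~\ref{thm:topes}. Thus $d \le 4$, and if $d = 4$ then again $I = \{0,1,2,3\}$: every tile is a regular $4$-polytope, adjacent tiles have opposite colours, and the edge figure $\sT/F_1$ is a two-orbit polyhedron in which only the last adjacency interchanges colours. By Theorem~\ref{thm:topes} this polyhedron is combinatorially a cuboctahedron, an icosidodecahedron, or a (regular) octahedron --- in each case its vertex figure is a quadrilateral, so exactly four tiles surround each $2$-face of $\sT$, alternating in colour. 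Therefore the dihedral angles of the two tile-types along a $2$-face sum to $\pi$; running through the regular $4$-polytopes sharing a facet type (those with tetrahedral facets have dihedral angles $\approx 75.5^\circ,\,120^\circ,\,164.5^\circ$, and the classes with cubical, octahedral, or dodecahedral facets are singletons with angles $90^\circ,\,120^\circ,\,144^\circ$) one finds no admissible pair, except a single tile-type of tesseracts --- but that forces $\sT$ to be a lattice tiling by a parallelepiped, whose translations act transitively on the tiles, contradicting the two tile-orbits. So $d = 4$ is impossible and $d \le 3$. This dimension bound, and especially the elimination of $d = 4$ without an a priori list of tilings, is the crux of the argument; it rests on Theorem~\ref{thm:topes} restricting the admissible edge figures so tightly (and, relatedly, on the fact that in dimensions $\ge 3$ a convex polytope that is combinatorially regular but geometrically two-orbit does not occur).

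For $d = 0$ the only ``tiling'' is degenerate, and for $d = 1$ the vertices carry no local geometry, so the only possibility is $I = \{0\}$: segments alternating between two lengths. For $d = 2$ and $d = 3$ there are finitely many admissible sets $I$; for each, the tiles are among the one- or two-orbit convex polytopes of the relevant dimension, the vertex figures among those one dimension lower, and they must fit together: vertex figures bound balls, and the face angles (in dimension $2$) or the dihedral angles around each ridge (in dimension $3$) sum to $2\pi$. Imposing these Euclidean conditions together with the edge-length and angle matchings forced by the two-orbit structure discards the chiral type, the planar types $I = \{0\}$ and $I = \{2\}$, and all but two of the sets $I$ in dimension $3$, and pins each surviving tiling down to its free parameters. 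This yields exactly the families asserted: in dimension $1$ the alternating-length apeirogon; in dimension $2$ the trihexagonal tiling ($I=\{0,1\}$), the rhombille tiling ($I=\{1,2\}$), the rhombus tilings ($I=\{1\}$), and the rectangle tilings ($I=\{0,2\}$); and in dimension $3$ the tetrahedral--octahedral honeycomb ($I=\{0,1,2\}$) and its dual the rhombic dodecahedral honeycomb ($I=\{1,2,3\}$).
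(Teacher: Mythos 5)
Your overall strategy---reduce to the classification of two-orbit polytopes via the sections of the tiling---is the paper's strategy too, but your execution has a dimensional error that propagates into a genuine gap. The vertex figure of a rank-$(d+1)$ tiling of $\E^d$ is a convex \emph{$d$-polytope}, not a $(d-1)$-polytope (e.g.\ the vertex figure of the cubic tiling of $\E^3$ is an octahedron). Once this is corrected, the paper's argument for $d \geq 4$ is immediate: both the facets and the vertex figures are convex $d$-polytopes with at most two flag orbits, hence (by Theorem~\ref{thm:topes}) regular, and a tiling with regular facets and regular vertex figures is regular. Your separate $d=4$ analysis via edge figures and dihedral angles is therefore unnecessary; worse, the step it rests on is unsound. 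You repeatedly infer that a section whose flags carry two colours under the stabilizer is ``genuinely two-orbit'' as a convex polytope (``no tile can be two-orbit, so $I=\{0,\dots,d-1\}$''). But the stabilizer of a tile is only the \emph{restricted subgroup}, which may have index $2$ in the tile's full symmetry group: the section can be a regular polytope carrying a two-colouring of its flags. The paper is explicit about this after Claim~\ref{regfaces} (``their full group of symmetries includes the restricted subgroup, but may be bigger''), and it is why your own parenthetical admission of an ``(regular) octahedron'' as a possible edge figure appears---the same loophole applies to the tiles themselves, so the conclusion $I=\{0,\dots,d-1\}$ does not follow from Theorem~\ref{thm:topes} alone.

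The second gap is that for $d=2$ and $d=3$---which is where almost all of the work in the paper lies---your proof reduces to the assertion that ``imposing these Euclidean conditions \dots discards \dots and pins each surviving tiling down.'' The paper actually carries this out case by case on the intransitivity type: for the plane it invokes Gr\"unbaum--Shephard's complete tables of isohedral, isotoxal and isogonal tilings for the fully transitive and $0$-intransitive cases, runs the quasiregular angle count for the $2$-intransitive case, and in the $1$-intransitive case must explicitly eliminate the ``strips of rhombi'' arrangement (which satisfies all your local conditions but has four flag orbits); for $\E^3$ it uses the list of the $28$ uniform honeycombs and a dihedral-angle computation for the rhombic dodecahedron and triacontahedron. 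None of these eliminations follows formally from the local matching conditions you state, so as written the proposal asserts rather than proves the list of surviving families.
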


In the above two theorems, all those examples which vary by a real parameter greater than one (both types of $2n$-gons, the apeirogon, and the tilings by rhombi and rectangles)
are combinatorially regular;
in each case, allowing the parameter to become one yields a regular polygon or tiling,
to which all other members of the family are isomorphic.
The other examples, namely the four polyhedra, the trihexagonal tiling,
the rhombille tiling, the tetrahedral-octahedral honeycomb, 
and the rhombic dodecahedral honeycomb,
are all unique (up to similarity),
and are all combinatorially two-orbit.

\section{Preliminary Facts}
Let $P$ be a $d$-polytope.
Recall that \emph{flags} are maximal chains of faces of~$P$.
Two flags are said to be \emph{adjacent} if they differ in exactly one face;
if they differ in the $j$-face, they are said to be $j$-adjacent.
The face lattice $\sL(P)$ satisfies the following four properties, 
which are in fact taken to be the definition of an abstract polytope of rank $d$ \cite[22]{ARP}:
\begin{enumerate}[label=(P\arabic*)]
\item
There is a least face $F_{-1}$, the \emph{empty face}, and a greatest face 
$F_d$, which is $P$ itself.
\item
Every flag contains $d+2$ faces.
\item\label{connectivity}
(Strong flag-connectivity:)
For any two flags $\Phi$ and $\Psi$ of $P$,
there exists a sequence of flags $\Phi \eqqcolon \Phi_0, \Phi_1, \dotsc, \Phi_k \coloneqq \Psi$,
such that each flag is adjacent to its neighbors
and $\Phi \cap \Psi \subseteq \Phi_i$ for each $i$.
\item
(The diamond condition:)
For any $j$, $1 \leq j \leq d$, any $j$-face $G$ of $P$,
and any $(j-2)$-face $F$ contained in $G$,
there are exactly two faces $H$ such that $F < H < G$.
\end{enumerate}
A $d$-polytope $Q$ is said to be \emph{dual} to $P$
if the face lattice $\sL(Q)$ is anti-isomorphic to the lattice $\sL(P)$,
that is, identical to $\sL(P)$ with the order reversed.
A bijective, order-reversing function $h \colon \sL(P) \to \sL(Q)$
is called a \emph{duality}.
A dual polytope to $P$ is often denoted $P^*$.
Clearly, any two duals of $P$ are combinatorially isomorphic.
A dual $P^*$ to any convex polytope $P$ may be constructed
by the process of \emph{polar reciprocation}:
After translating $P$, if necessary, so that the origin is contained in its interior,
let $P^* = \bigcap_{y \in P}\set{x}{\scprd{x}{y} \leq 1}$,
where $\scprd{x}{y}$ is the scalar product.
Then $(P^*)^* = P$ and $G(P^*) = G(P)$.
Thus, when necessary, we may assume that a polytope and its dual
have the same symmetry group.

For any two faces $F$ and $G$ of $P$ with $F \leq G$,
$G/F$ denotes the \emph{section} of $\sL(P)$
whose face lattice is $\set{H \in \sL(P)}{F \leq H \leq G}$.
This section may be realized as a convex polytope by taking the dual polytope
$G^*$ to $G$, say with a duality $h \colon G \to G^*$;
then the dual $h(F)^*$ to the face $h(F)$ of $G^*$ is the desired polytope.

A subgroup of $G(P)$ acts on the section $G/F$;
namely, those symmetries which fix all faces of $P$ which contain $G$
and all faces of $P$ which are faces of $F$.
These form a subgroup which acts faithfully on $G/F$ in a well-defined way.
As symmetries of $G/F$, this group is a subgroup of the symmetry group of $G/F$.
We call it the \emph{restricted subgroup}, denoted $G_P(G/F)$ (this is not standard notation.)

Note that the symmetry group of a two-orbit $d$-polytope $P$ can have at most
two orbits on its $j$-faces, for any $j < d$.

\begin{clm}\label{trans}
Suppose $P$ is a two-orbit $d$-polytope.
If the symmetry group $G(P)$ is not transitive on $j$-faces
for some $j$,
then $G(P)$ is transitive on $i$-faces for all $i \neq j$,
where $0 \leq i, j \leq d-1$.
\end{clm}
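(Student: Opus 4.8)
The plan is to use the action of a flag on its adjacent flags to propagate transitivity from one rank to all others. Recall that a flag $\Phi$ has exactly one $j$-adjacent flag $\Phi^j$ for each $j \in \{0,1,\dotsc,d-1\}$, by the diamond condition together with the fact that the improper faces $F_{-1}$ and $F_d$ lie in every flag. The key observation is that since $P$ has exactly two flag orbits, every flag $\Phi$ lies in one orbit, say $\mathcal{O}_1$, and since $\Phi$ is adjacent to $\Phi^0, \Phi^1, \dotsc, \Phi^{d-1}$, each of these $d$ neighbors lies in $\mathcal{O}_1$ or in the other orbit $\mathcal{O}_2$. I would first argue that at least one index $j_0$ has the property that $\Phi$ and $\Phi^{j_0}$ lie in \emph{different} orbits for every flag $\Phi$: if instead some flag were in the same orbit as all of its neighbors, then by strong flag-connectivity \ref{connectivity} every flag would be in that one orbit, contradicting the two-orbit hypothesis. (More carefully: the set of flags reachable from $\Phi$ by adjacencies that stay within $\mathcal{O}_1$ would be a union of orbits closed under all adjacencies, hence all of $\sF(P)$.)

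Next I would show that the ``switching index'' is well-defined, i.e.\ that there is a single $j_0$ such that for \emph{every} flag $\Phi$, the flag $\Phi^{j_0}$ is in the opposite orbit from $\Phi$, and $\Phi^i$ is in the same orbit as $\Phi$ for all $i \neq j_0$. Here is the argument: suppose $\Phi$ is $j$-adjacent to a flag in the opposite orbit. If $\Psi$ is any other flag, then either $\Psi$ is in the same orbit as $\Phi$ — in which case $\Psi = g\Phi$ for some $g \in G(P)$, so $\Psi^j = g(\Phi^j)$ is in the opposite orbit from $\Psi$ — or $\Psi$ is in the opposite orbit, and then $\Psi = g\Phi^j$ for some $g$, so $\Psi^j = g((\Phi^j)^j) = g\Phi$ is in the opposite orbit from $\Psi$. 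So the property ``$\Phi$ and $\Phi^j$ lie in opposite orbits'' holds for one flag if and only if it holds for all flags. Combining with the previous paragraph, the set $S = \{\, j : \Phi,\Phi^j \text{ lie in opposite orbits}\,\}$ is the same for every $\Phi$ and is nonempty. It remains to see that $|S| = 1$; but this is exactly the content of Claim~\ref{trans} restated, since $G(P)$ is transitive on $i$-faces precisely when no two flags differing only in the $i$-face lie in different orbits — that is, precisely when $i \notin S$. So I must show $S$ cannot have two or more elements.

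To finish, I would translate transitivity on $j$-faces into a statement about $S$ and count. The orbits of $j$-faces correspond to the $G(P)$-orbits of flags modulo the equivalence generated by the adjacencies $\Phi \sim \Phi^i$ for $i \neq j$; $G(P)$ is transitive on $j$-faces exactly when all flags become equivalent under those adjacencies, i.e.\ when starting from either flag orbit one can reach the other using only $i$-adjacencies with $i \neq j$. If $j \notin S$, then even the single adjacency $\Phi \mapsto \Phi^{j'}$ for $j' \in S$ (note $j' \neq j$) already crosses between the two orbits, so $G(P)$ is transitive on $j$-faces. Hence $G(P)$ fails to be transitive on $j$-faces only if $j \in S$. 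Given that $G(P)$ is not transitive on $j$-faces for the specific $j$ in the hypothesis, we get $j \in S$; I then need to rule out $|S| \geq 2$, which I expect to be the main obstacle. The cleanest route: if $j, k \in S$ with $j \neq k$, consider the two generators $\Phi \mapsto \Phi^j$ and $\Phi \mapsto \Phi^k$ of moves between the two orbits, and the composite $\Phi \mapsto (\Phi^j)^k$, which returns to the original orbit. One shows that the subgroup of $G(P)$ generated by the flag-stabilizer of $\Phi$ together with the element carrying $\Phi$ to $(\Phi^j)^k$ acts transitively on all flags in $\mathcal{O}_1$ but — using that $j$ and $k$ are non-adjacent in the sense of the flag graph when $|j-k| \geq 2$, or handling $|j-k|=1$ via the diamond condition in the rank-$2$ section — produces a contradiction with connectivity or with the orbit count; equivalently, one checks directly that $|S| \geq 2$ forces $G(P)$ to have more than two flag orbits. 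I would carry this out by a short analysis of the possible patterns of orbit labels on the flag graph, which is a finite case check once $|S|$ is bounded.
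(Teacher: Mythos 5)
Your route is genuinely different from the paper's. The paper fixes two intransitive ranks $i>j$, labels each flag by the pair (orbit class of its $j$-face, orbit class of its $i$-face), and uses strong flag-connectivity to show that some flag along a path from an A-I flag to a B-II flag must carry a third label, giving at least three flag orbits. You instead organize everything around the set $S$ of ranks at which passing to the adjacent flag switches flag orbits (essentially the complement of Hubard's class symbol $2_I$). Your first two steps --- that $S$ is nonempty and independent of the base flag, and that $G(P)$ is transitive on $j$-faces as soon as some $i$-adjacency with $i\neq j$ crosses between the two orbits --- are correct and correctly justified.

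The gap is in the last step, and it originates in the sentence ``$G(P)$ is transitive on $i$-faces \dots precisely when $i\notin S$.'' Only the ``if'' direction of that is true; $i\in S$ does not imply intransitivity on $i$-faces (the paper's rhombus tiling has $S=\{0,2\}$ yet is fully transitive). This false equivalence is what makes you believe you must separately prove $\abs{S}=1$, and the route you propose for that --- showing $\abs{S}\geq 2$ forces more than two flag orbits --- cannot work: $\abs{S}\geq 2$ merely forces full transitivity, which coexists with two flag orbits for tilings and abstract polytopes, and for convex polytopes can only be excluded via Theorem~\ref{fullytransreg}, a heavy result that Claim~\ref{trans} should not (and in the paper does not) depend on, since the claim is reused for tilings where that theorem fails. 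Fortunately the detour is unnecessary: the correct criterion, which you state yourself in the next paragraph, is that $G(P)$ is transitive on $j$-faces if and only if some $i$-adjacency with $i\neq j$ crosses orbits, i.e.\ if and only if $S\setminus\{j\}\neq\emptyset$. The hypothesis that $G(P)$ is \emph{not} transitive on $j$-faces therefore gives $S\setminus\{j\}=\emptyset$, hence $S=\{j\}$ (not merely $j\in S$) since $S\neq\emptyset$. Then for any $i\neq j$, the rank $j\in S$ itself supplies a crossing adjacency of rank different from $i$, so $G(P)$ is transitive on $i$-faces, and the proof is complete with no case analysis on the flag graph.
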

\begin{proof}
Otherwise, we have two orbit classes of $i$-faces, say class I and II,
and two classes of $j$-faces, say A and B. Without loss of generality,
suppose $j < i$.
Let us say that a flag of $P$ whose $j$-face is in class A
and whose $i$-face is in class I is an A-I flag, and similarly
for other cases.
Then we have more than two flag types, A-I, A-II, B-I, and B-II,
unless the $j$-faces in class A occur only in one class of $i$-faces,
say I, and $j$-faces in class B occur only in $i$-faces in class II.
But, as we will show, this violates the connectivity property \ref{connectivity}.

Let $\Phi$ be an A-I flag and $\Psi$ be a B-II flag.
By flag-connectedness there is a sequence of adjacent flags,
$\Phi = \Phi_0, \Phi_1, \dotsc, \Phi_k = \Psi$.
Let $\ell$ be the least index such that $\Phi_\ell$ contains a $j$-face
in class B or an $i$-face in class II, or both.
Then $\Phi_{\ell-1}$ is an A-I flag,
and since $\Phi_\ell$ is adjacent to $\Phi_{\ell-1}$,
only one face is different,
so $\Phi_\ell$ is either an A-II flag or a B-I flag.
Therefore $P$ has at least three flag orbits.
\end{proof}

Polytopes which are transitive on $j$-faces
for all $1 \leq j \leq d-1$ are called \emph{fully transitive}.
It is a theorem of McMullen's thesis \cite{McMThesis}
that fully transitive convex polytopes are regular.
\begin{thm}[McMullen {\cite[\nopp 4C6]{McMThesis}}]\label{fullytransreg}
A $d$-polytope $P$ is regular if and only if
for each $j = 0, \dotsc, d-1,$ its symmetry group $G(P)$
is transitive on the $j$-faces of $P$.
\end{thm}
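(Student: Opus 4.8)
The forward direction is immediate: if $P$ is regular then $G(P)$ is transitive on $\sF(P)$, and since every $j$-face lies in some flag and any flag can be carried to any other, $G(P)$ is transitive on $j$-faces for each $j$. For the converse I would argue by induction on $d$. The cases $d\le 1$ are trivial. For $d=2$: the finite group $G(P)$ fixes the barycentre $c$ of $P$, so vertex-transitivity forces all vertices onto one circle about $c$ and edge-transitivity forces all edges to be congruent; an equilateral polygon inscribed in a circle has equal central angles and is therefore regular.

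For the inductive step ($d\ge 3$), fix a facet $F$ and set $H\coloneqq\set{g\in G(P)}{gF=F}$. Since $G(P)$ is transitive on facets and the flags of $P$ containing $F$ correspond $H$-equivariantly to the flags of $F$ viewed as a $(d-1)$-polytope, the action of $G(P)$ on $\sF(P)$ is induced from that of $H$ on $\sF(F)$; hence the number of flag orbits of $P$ equals the number of $H$-orbits on $\sF(F)$, and $P$ is regular if and only if $H$ acts flag-transitively on $F$. (One also notes that $G(P)$ acts freely on $\sF(P)$, since an isometry fixing a flag fixes the $d+1$ affinely independent barycentres of its faces together with that of $P$; this is convenient for the bookkeeping.) Thus the whole matter reduces to two assertions: \emph{(i)} $H$ is transitive on the $i$-faces of $F$ for every $i\le d-2$ — equivalently, $G(P)$ is transitive on the incident pairs (an $i$-face, a facet containing it) — whence by the inductive hypothesis applied to $G(F)\supseteq H$ the facet $F$ is regular; and \emph{(ii)} every symmetry of $F$ extends to one of $P$, i.e.\ $H=G(F)$, so that $H$ inherits the flag-transitivity of $G(F)$.

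I expect \emph{(i)} to be the \emph{main obstacle}, with \emph{(ii)} entangled with it. The crucial point is that neither statement can be had on combinatorial grounds alone: an abstract polytope can be transitive on faces of every rank yet fail to be regular — the chiral polytopes, recalled in the introduction, are examples — so a proof must exploit convexity. The metric data available are exactly the consequences of the lower-rank transitivities: after translating the common fixed point of $G(P)$ to the origin, all vertices lie on one sphere, all edges share a length, and all $k$-faces are mutually congruent with barycentres on a common sphere.

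I would feed these into a flag-connectivity argument in the spirit of Claim~\ref{trans}. To prove \emph{(i)} for two $i$-faces $I,I'$ of $F$, use $i$-face-transitivity to get $g\in G(P)$ with $gI=I'$, and then ``rotate'' the facet $gF$ back onto $F$ about $I'$ using symmetries fixing $I'$; this reduces the claim to the statement that the stabiliser of a face acts transitively on the higher-dimensional faces containing it, an incidence-transitivity statement one codimension deeper, to be settled by the same induction together with the rigidity above and, at the bottom, by pinning down the configuration along a single edge and at a single vertex (using that an isometry fixing an edge of $P$ and moving along an adjacency of \ref{connectivity} is forced by the equal-edge-length and circumsphere conditions). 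Once \emph{(i)} holds, $F$ is regular and $G(F)$ is generated by the reflections exchanging a base flag of $F$ with its adjacent flags; establishing \emph{(ii)} then amounts to extending each such generator to $P$, i.e.\ to exhibiting the distinguished generators of $G(P)$ itself, which closes the induction and gives that $G(P)$ is transitive on $\sF(P)$, so that $P$ is regular.
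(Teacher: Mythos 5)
First, for context: the paper does not prove Theorem~\ref{fullytransreg} at all --- it is imported from McMullen's thesis \cite{McMThesis} and used as a black box --- so there is no in-paper argument to measure yours against; it must stand on its own. Your forward direction, your $d=2$ case (equal chords subtend equal arcs, and for a convex inscribed polygon the arcs sum to $2\pi$, forcing equality), and your overall skeleton --- induct on $d$, reduce flag-transitivity of $G(P)$ to flag-transitivity of the facet-stabilizer $H$ on a fixed facet $F$, split into (i) transitivity of $H$ on the $i$-faces of $F$ and (ii) $H=G(F)$ --- are all sound, as is your observation that the combinatorial analogue fails (chiral polytopes), so that genuinely metric input is required.

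The gap is that (i) and (ii) are exactly where the theorem lives, and neither is actually proved. Your argument for (i) --- carry $I$ to $I'$ by $i$-face-transitivity, then rotate $gF$ back onto $F$ about $I'$ --- needs the stabilizer of $I'$ to act transitively on the facets containing $I'$, which is the very same assertion of transitivity on incident pairs $(i\text{-face},\text{facet})$ that you set out to establish, just read from the other end; the reduction is not ``one codimension deeper,'' and the promised resolution ``at the bottom, by pinning down the configuration along a single edge and at a single vertex'' is never carried out (even the bottom case, that the stabilizer of a ridge interchanges the two facets containing it, requires a genuine argument exhibiting the relevant reflection as a symmetry of $P$). The metric consequences you list do give something cheaply --- every $2$-face is a regular polygon, since its vertices lie on the circle $\aff(F_2)\cap S$ and its edges are equal --- but the analogous step already fails one rank up: a facet inherits only ``vertices on a sphere, equal edges, congruent regular $2$-faces,'' not transitivity of its own symmetry group on its own faces, and such polyhedra are not obviously regular. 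Likewise (ii), extending the generating reflections of $G(F)$ to isometries of $\E^d$ preserving $P$, is asserted rather than argued. So what you have is a correct road map with the two load-bearing steps missing; completing it requires actually converting rank-by-rank transitivity into transitivity on chains, which is the nontrivial content of McMullen's 4C6.
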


Therefore, for a two-orbit $d$-polytope $P$
there is a $j$, $0 \leq j \leq d-1$,
so that $G(P)$
is not transitive on the $j$-faces
but is transitive
on the faces of every other rank.
We shall call such a polytope $j$-intransitive.

In the language of \textcite{hubard2010two}, a 0-intransitive two-orbit polyhedron
is of class $2_{1,2}$, a 1-intransitive two-orbit polyhedron is of class $2_{0,2}$,
and a 2-intransitive two-orbit polyhedron is of class $2_{0,1}$.
Claim~\ref{trans} and the above comments were proved in \cite{hubard2010two}.
They are consequences of Theorem~5 therein,
which we may paraphrase to say
that an (abstract) two-orbit $d$-polytope $P$ is either fully transitive,
or there exists a $j$ ($1 \leq j \leq d$) such that $P$
is $i$-transitive for every $i \neq j$, but not for $i = j$.
In using any results about abstract two-orbit polytopes, however,
we must be careful to remember that convex two-orbit polytopes
may be combinatorially regular and not combinatorially two-orbit.

\begin{clm}\label{freeflags}
For any convex polytope $P$,
the order of the symmetry group $G(P)$ divides the number of flags of $P$.
Each flag orbit has the same size, namely $\abs{G(P)}$,
and so $P$ is a two-orbit polytope if and only if the number of flags
is twice the order of $G(P)$.
\end{clm}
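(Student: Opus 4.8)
The plan is to reduce everything to a single structural fact: the symmetry group $G(P)$ acts \emph{freely} on the flag set $\sF(P)$, i.e.\ the stabilizer of every flag is trivial. Granting this, the orbit--stabilizer theorem immediately gives that every flag orbit has exactly $\abs{G(P)}$ elements; since $\sF(P)$ is the disjoint union of its orbits, $\abs{G(P)}$ divides $\abs{\sF(P)}$, and $P$ is two-orbit precisely when $\sF(P)$ splits into two orbits, i.e.\ when $\abs{\sF(P)} = 2\abs{G(P)}$. So the whole content is the freeness of the action, which I would prove as follows.

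\textbf{Key steps.} First, suppose $g \in G(P)$ fixes a flag $\Phi = \{F_{-1}, F_0, F_1, \dotsc, F_{d-1}, F_d\}$, where $F_j$ denotes the $j$-face of $\Phi$. Since $g$ is a Euclidean isometry, it preserves dimensions of affine hulls, so the lattice automorphism it induces is rank-preserving; hence $g(F_j) = F_j$ as a subset of $\E^d$ for every $j$. Second, because $g$ maps each $F_j$ onto itself isometrically, it fixes the centroid (barycenter) $b_j$ of $F_j$ for $j = 0, 1, \dotsc, d$. Third — the crux — I would show $b_0, b_1, \dotsc, b_d$ are affinely independent: each $b_j$ lies in the relative interior of $F_j$, while $F_{j-1}$, being a proper face of $F_j$, lies in the boundary of $F_j$; hence $b_j \notin \aff(F_{j-1}) \supseteq \aff\{b_0,\dotsc,b_{j-1}\}$, so adjoining $b_j$ raises the dimension of the affine span by one at each stage. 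This yields $d+1$ affinely independent fixed points, which form an affine frame of $\E^d$; an isometry fixing an affine frame is the identity, so $g = \id$. That completes the proof that the action is free, and the numerical statements follow as above.

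\textbf{Main obstacle.} The only step requiring genuine care is the affine-independence of the centroids $b_0, \dotsc, b_d$ — specifically, verifying that $b_j$ escapes $\aff(F_{j-1})$. This rests on the convex-geometric fact that a proper face of a convex polytope is contained in the relative boundary of the next larger face, so its affine hull is a proper affine subspace; I would cite the basic face-lattice properties (e.g.\ that $F_{j-1} < F_j$ forces $F_{j-1} \subseteq \partial F_j$) and the elementary fact that the centroid of a polytope lies in its relative interior. Everything else — rank-preservation by isometries, fixing centroids, orbit--stabilizer, and the partition of $\sF(P)$ into orbits — is routine and uses only the $G(P)$-action on $\sF(P)$ already set up in the preliminaries.
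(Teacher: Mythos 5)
Your proof is correct, but the key step is argued quite differently from the paper. Both proofs reduce the claim to the freeness of the $G(P)$-action on $\sF(P)$ and then invoke orbit--stabilizer. For freeness, the paper argues combinatorially: a symmetry $\gamma$ fixing a flag $\Phi$ satisfies $\gamma(\Phi^i) = \gamma(\Phi)^i = \Phi^i$, so it fixes all adjacent flags, hence by strong flag-connectivity all flags, hence all faces, hence is the identity. You instead argue geometrically: $\gamma$ fixes the centroids $b_0, \dotsc, b_d$ of the faces in $\Phi$, and these are affinely independent, so $\gamma$ fixes an affine frame of $\E^d$ and must be the identity. Your route avoids the flag-connectivity axiom entirely and makes explicit the final step (``fixes everything combinatorially $\Rightarrow$ is the identity isometry'') that the paper leaves implicit; the paper's route is the one that transfers to abstract polytopes and to the tilings considered later, where the geometric frame argument would need adjustment. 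One small point deserves more care in your write-up: to get $b_j \notin \aff(F_{j-1})$ it is not quite enough that $\aff(F_{j-1})$ is a \emph{proper} affine subspace of $\aff(F_j)$ and that $F_{j-1}$ lies in the relative boundary of $F_j$ --- a proper affine subspace can still pass through the relative interior. The clean justification is that $F_{j-1}$, being a face of $F_j$, is cut out by a supporting hyperplane $H$ of $F_j$ (within $\aff(F_j)$); then $\aff(F_{j-1}) \subseteq H$, while the relative interior of $F_j$, which contains $b_j$, is disjoint from every supporting hyperplane. With that patch the affine-independence induction, and hence the whole argument, goes through.
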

\begin{proof}
This all follows from the fact that $G(P)$ acts freely
on the set of flags of $P$.
Let $\Phi$ be any flag of $P$.
Any $\gamma \in G(P)$
acts on the $j$-adjacent flag $\Phi^j$ to $\Phi$
as $\gamma(\Phi^j) = \gamma(\Phi)^j$,
since $\gamma$ is an automorphism of the face lattice.
Therefore, if $\gamma \in G(P)$ is such that
$\gamma(\Phi) = \Phi$,
then $\gamma$ will also fix each flag adjacent to $\Phi$,
and thus all flags of $P$ by flag-connectedness,
so $\gamma$ is the identity.
\end{proof}
It follows that the dual to a two-orbit polytope 
is two-orbit; the dual to a $j$-intransitive $d$-polytope
is $(d-j-1)$-intransitive.

\begin{clm}\label{adjflags}
If $P$ is a two-orbit $j$-intransitive $d$-polytope,
and $\Phi$ is any flag,
then for any $i \neq j$
the $i$-adjacent flag $\Phi^i$
is in the same orbit as $\Phi$.
That is, there exists a symmetry $\rho \in G(P)$
such that $\rho(\Phi) = \Phi^i$.
\end{clm}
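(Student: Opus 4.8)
The plan is to use a counting argument based on Claim~\ref{freeflags}. Since $P$ is a two-orbit polytope, it has exactly $2\abs{G(P)}$ flags, partitioned into two orbits each of size $\abs{G(P)}$. Fix a flag $\Phi$ and consider its $i$-adjacent flag $\Phi^i$ for some $i \neq j$. I want to show $\Phi$ and $\Phi^i$ lie in the same orbit, equivalently that no symmetry sends one to the other would be the wrong direction — I need to exhibit such a symmetry, so suppose for contradiction that $\Phi$ and $\Phi^i$ lie in different orbits.

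First I would set up the dichotomy: under this assumption, the two flag orbits are exactly $\mathcal{O} = G(P)\cdot\Phi$ and $\mathcal{O}' = G(P)\cdot\Phi^i$. The key observation is that $i$-adjacency commutes with the group action (as used in the proof of Claim~\ref{freeflags}): for any $\gamma \in G(P)$, $\gamma(\Psi^i) = \gamma(\Psi)^i$. Hence if $\Psi \in \mathcal{O}$, then $\Psi^i = (\gamma\Phi)^i = \gamma(\Phi^i) \in \mathcal{O}'$, and symmetrically $\Psi \in \mathcal{O}'$ implies $\Psi^i \in \mathcal{O}$. So $i$-adjacency would be a perfect matching between $\mathcal{O}$ and $\mathcal{O}'$, and in particular \emph{every} flag in orbit $\mathcal{O}$ has its $i$-adjacent partner in the other orbit. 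The next step is to derive a contradiction with transitivity on $i$-faces. Since $i \neq j$ and $P$ is $j$-intransitive, $G(P)$ \emph{is} transitive on $i$-faces. Now count flag-pairs related by $i$-adjacency in terms of $i$-faces: each flag is determined by choosing an $i$-face $F$ together with a flag of the section $F/F_{-1}$ and a flag of $F_d/F$; the $i$-adjacent flag keeps both of these and swaps $F$ for the unique other $i$-face sharing the same $(i-1)$-face below and $(i+1)$-face above (diamond condition).

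The heart of the argument is then: consider the restricted action on a ridge-type section, or more directly, count within the set of flags containing a fixed $(i-1)$-face $F'$ and $(i+1)$-face $F''$. By the diamond condition there are exactly two $i$-faces $F_1, F_2$ with $F' < F_k < F''$, so the flags refining the chain $\cdots F' < \ast < F'' \cdots$ come in $i$-adjacent pairs $\{\Phi, \Phi^i\}$. If all such pairs straddle the two orbits, then locally the orbits alternate. I would now pick a symmetry realizing the transitivity on $i$-faces together with the diamond condition: since $G(P)$ is transitive on $i$-faces, there is some $\gamma$ with $\gamma(F_1) = F_2$; composing appropriately, I expect to produce a symmetry fixing $\Phi$ setwise-on-most-faces but swapping $F_1 \leftrightarrow F_2$, i.e.\ sending $\Phi$ to $\Phi^i$, contradicting that they lie in different orbits. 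Alternatively — and this is the cleaner route — I would argue that the subgroup of $G(P)$ stabilizing the orbit $\mathcal{O}$ is all of $G(P)$ (orbits are preserved by the group), yet the alternating structure would force $G(P)$ to act on the $i$-faces with two orbits (the $i$-faces appearing in $\mathcal{O}$-flags versus those appearing in $\mathcal{O}'$-flags could differ), contradicting $i$-transitivity unless every $i$-face appears in both — but then some $i$-face $F$ lies in a flag of $\mathcal{O}$ and in a flag of $\mathcal{O}'$, and by transitivity combined with freeness (Claim~\ref{freeflags}) I can transport one to the other.

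The main obstacle I anticipate is the last step: carefully producing the symmetry $\rho$ with $\rho(\Phi) = \Phi^i$, rather than merely deriving a parity/counting contradiction. The subtlety is that $i$-transitivity of $G(P)$ gives a symmetry moving one $i$-face to another, but not obviously one that fixes the rest of the flag $\Phi$; I will likely need to combine it with the observation that such a symmetry permutes the two flag orbits and use Claim~\ref{freeflags} (free action, equal orbit sizes) to pin down its effect on $\Phi$ exactly. A clean way to finish: since $G(P)$ is transitive on $i$-faces and $\Phi^i$ shares all faces of $\Phi$ except the $i$-face, pick $\gamma \in G(P)$ sending the $i$-face of $\Phi$ to the $i$-face of $\Phi^i$; then $\gamma(\Phi)$ and $\Phi^i$ are two flags containing the \emph{same} $i$-face (and possibly differing elsewhere). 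If they are equal we are done; if not, I will need an inductive or diamond-condition argument on the sections $F/F_{-1}$ and $F_d/F$ (which have fewer ranks and inherit transitivity from the restricted subgroup) to correct the discrepancy — that reduction to sections is where the real work lies, and I would isolate it as the technical core of the proof.
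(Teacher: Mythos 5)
Your proposal does not close. You end by conceding that producing the symmetry $\rho$, or the ``reduction to sections,'' is ``where the real work lies,'' and none of the three routes you sketch actually finishes. The route via $i$-transitivity is the wrong lever: choosing $\gamma$ carrying the $i$-face of $\Phi$ to the $i$-face of $\Phi^i$ leaves you with two flags that agree only in that $i$-face, and ``correcting the discrepancy'' inside the sections below and above that $i$-face is not routine, because whichever of those sections straddles rank $j$ still has two flag orbits under the restricted subgroup, so you cannot simply transport $\gamma(\Phi)$ to $\Phi^i$ there. Your middle route also stalls: under your contradiction hypothesis the map from flag orbits to $i$-face orbits is two-to-one, so every $i$-face does lie in flags of both orbits, and that is perfectly consistent --- no contradiction with $i$-transitivity arises from it. Note that $j$-adjacency genuinely does swap the two orbits (Corollary~\ref{inadjflags}), so any correct argument must exploit what is special about $i\neq j$, and your sketches never isolate that.

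The missing idea is to look at the $j$-face rather than the $i$-face. Sending a flag to the $G(P)$-orbit of its $j$-face gives a well-defined map from the set of flag orbits onto the set of $j$-face orbits; both sets have exactly two elements, so this surjection is a bijection, and hence the orbit of a flag is determined entirely by the orbit of its $j$-face. Since $i \neq j$, the flags $\Phi$ and $\Phi^i$ share their $j$-face, so they lie in the same flag orbit. That is the paper's entire proof: no counting via Claim~\ref{freeflags}, no diamond condition, and no induction on sections is needed.
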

\begin{proof}
Since there are only two flag orbits,
and two classes of $j$-faces,
the orbit of a given flag is determined entirely by
its $j$-face.
For $i \neq j$,
$\Phi$ and $\Phi^i$ share their $j$-face,
hence are in the same flag orbit.
\end{proof}

\begin{crl}\label{inadjflags}
If $P$ is a two-orbit $j$-intransitive $d$-polytope,
and $\Phi$ is any flag,
then the $j$-adjacent flag $\Phi^j$
is not in the same flag orbit as $\Phi$.
\end{crl}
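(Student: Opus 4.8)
The plan is to argue by contradiction, combining Claim~\ref{adjflags} with the observation that $G(P)$ does not act transitively on $j$-faces. Suppose, for some flag $\Phi$, that $\Phi^j$ lies in the same flag orbit as $\Phi$, so there is a symmetry $\rho \in G(P)$ with $\rho(\Phi) = \Phi^j$. Combined with Claim~\ref{adjflags}, which tells us that for every $i \neq j$ the $i$-adjacent flag $\Phi^i$ is also in the orbit of $\Phi$, this would mean that \emph{every} flag adjacent to $\Phi$ is in the orbit of $\Phi$.

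From there the strategy is to propagate this property through the whole flag graph using the strong flag-connectivity property \ref{connectivity}. First I would establish that the set $\Omega$ of flags in the orbit of $\Phi$ is closed under taking $i$-adjacent flags for \emph{every} $i \in \{0, 1, \dots, d\}$: for $i \neq j$ this is immediate from Claim~\ref{adjflags} applied at an arbitrary flag of $\Omega$ (the statement there holds for any flag), and for $i = j$ it follows because if $\Psi \in \Omega$, say $\Psi = g(\Phi)$ with $g \in G(P)$, then $\Psi^j = g(\Phi^j) = g(\rho(\Phi)) \in \Omega$, using that symmetries commute with the $j$-adjacency operation (as in the proof of Claim~\ref{freeflags}). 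Hence $\Omega$ is closed under all adjacencies, and since the flag graph of $P$ is connected (a consequence of \ref{connectivity}), $\Omega$ must be the entire set $\sF(P)$. But then $P$ has only one flag orbit, so by Theorem~\ref{fullytransreg} $P$ is regular and in particular $G(P)$ is transitive on $j$-faces, contradicting the assumption that $P$ is $j$-intransitive.

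I expect the main (though modest) obstacle to be handling the $i=j$ closure step carefully: one must make sure the symmetry $\rho$ realizing $\Phi^j$ can be "transported" along the orbit, which is exactly the content of the identity $\gamma(\Phi)^j = \gamma(\Phi^j)$ used in Claim~\ref{freeflags}. Everything else is bookkeeping with the connectivity of the flag graph. An alternative, slightly slicker phrasing avoids invoking Theorem~\ref{fullytransreg}: once $\Omega = \sF(P)$ we directly contradict the hypothesis that there are two flag orbits, since the two classes of $j$-faces from $j$-intransitivity would otherwise force at least two orbits (flags are distinguished by the class of their $j$-face, as noted in the proof of Claim~\ref{adjflags}). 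Either way, the corollary drops out; I would present the version that leans on the already-proved claims to keep the argument self-contained and short.
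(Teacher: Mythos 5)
Your proof is correct and follows essentially the same route as the paper: combine Claim~\ref{adjflags} with the hypothetical symmetry carrying $\Phi$ to $\Phi^j$ to conclude that every adjacent flag lies in the orbit of $\Phi$, then use flag-connectedness to force a single orbit, contradicting two-orbitness. The only difference is that you spell out the orbit-propagation argument which the paper outsources to a citation (Proposition 2B4 of the McMullen--Schulte book), which is a fine and self-contained way to do it.
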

\begin{proof}
If $\Phi^j$ were in the same orbit as $\Phi$,
then by Claim~\ref{adjflags},
for each $i = 0, \dotsc, d - 1$ there exists
an isometry $\rho_i$ of $P$
such that $\rho_i(\Phi) = \Phi^i$.
But if a flag is in the same orbit 
as all of its adjacent flags,
it follows from flag-connectedness that $P$ is regular
(see Proposition 2B4 of \cite{ARP} or Theorem 4B1 of \cite{McMThesis}.)
\end{proof}

The next corollary is immediate from Corollary~\ref{inadjflags}.

\begin{crl}\label{alternate}
If $P$ is a two-orbit $j$-intransitive $d$-polytope,
then for any $(j+1)$-face $F_{j+1}$ of $P$
and any $(j-1)$-face $F_{j-1}$ contained in $F_{j+1}$,
the two $j$-faces $H$ with $F_{j-1} < H < F_{j+1}$
are in different $j$-face orbits.
\end{crl}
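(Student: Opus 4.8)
The plan is to reduce the statement directly to Corollary~\ref{inadjflags} by producing a flag through $F_{j-1}$ and $F_{j+1}$ whose $j$-adjacent flag exhibits the two $j$-faces in question. First I would apply the diamond condition (P4), at rank $j+1$ (legitimate since $0 \leq j \leq d-1$ gives $1 \leq j+1 \leq d$), to name the two $j$-faces with $F_{j-1} < H < F_{j+1}$ as $H_1$ and $H_2$. Then I would extend the chain $F_{j-1} < H_1 < F_{j+1}$ to a full flag $\Phi$ of $P$, which is possible because flags are exactly the maximal chains of $\sL(P)$.

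Next, consider the $j$-adjacent flag $\Phi^j$. It agrees with $\Phi$ in every face except the $j$-face; in particular it still contains $F_{j-1}$ and $F_{j+1}$, so by the diamond condition its $j$-face is forced to be the other face strictly between them, namely $H_2$. By Corollary~\ref{inadjflags}, $\Phi$ and $\Phi^j$ lie in different flag orbits. On the other hand, since symmetries are rank-preserving automorphisms of $\sL(P)$, any symmetry carrying one flag to another carries the $j$-face of the first onto the $j$-face of the second (this is the observation used in the proof of Claim~\ref{adjflags}); hence flags in a common flag orbit have $j$-faces in a common $j$-face orbit. Applying this contrapositively to $\Phi$ and $\Phi^j$, whose $j$-faces are $H_1$ and $H_2$, yields the conclusion that $H_1$ and $H_2$ lie in different $j$-face orbits.

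The one subtlety — and the only place $j$-intransitivity (rather than merely being two-orbit) is used — is upgrading "different flag orbits implies different $j$-face orbits" from the weaker "same flag orbit implies same $j$-face orbit." I would handle this by noting that $P$ has exactly two flag orbits and exactly two $j$-face orbits, so the well-defined map sending each flag orbit to the $j$-face orbit of its members is a map between two-element sets; if it were constant then every $j$-face (each lying in some flag) would fall in a single orbit, contradicting $j$-intransitivity, so it is a bijection. Everything else is routine bookkeeping with the diamond condition, so I do not expect any real obstacle.
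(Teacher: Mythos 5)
Your proof is correct and takes essentially the same route as the paper, which dismisses this corollary as ``immediate from Corollary~\ref{inadjflags}'': you extend the chain to a flag, pass to its $j$-adjacent flag, and invoke the correspondence between the two flag orbits and the two $j$-face orbits. The ``subtlety'' you isolate (that different flag orbits force different $j$-face orbits) is exactly the two-element-set bijection the paper uses implicitly when it repeatedly asserts that ``the orbit of a given flag is determined entirely by its $j$-face.''
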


In the following, by ``chain of cotype $\{j\}$'' we mean
a chain of faces in $\sL(P)$
including a face of each rank except $j$.

\begin{clm}\label{cochain-trans}
If $P$ is a two-orbit $j$-intransitive $d$-polytope,
then $G(P)$ acts transitively on chains of cotype $\{j\}$.
\end{clm}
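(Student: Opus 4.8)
The plan is to show that the action of $G(P)$ on chains of cotype $\{j\}$ is transitive by exploiting the fact, established in Claims~\ref{adjflags} and \ref{freeflags}, that the two flag orbits are distinguished precisely by the orbit of the $j$-face, and that $G(P)$ acts freely on flags. A chain $C$ of cotype $\{j\}$ is a chain containing a face of every rank except $j$; by the diamond condition, there are exactly two $j$-faces $H$ with $F_{j-1} < H < F_{j+1}$ (where $F_{j\pm1}$ are the rank-$(j\pm1)$ members of $C$), and completing $C$ with either of them gives a flag. So each cotype-$\{j\}$ chain $C$ is contained in exactly two flags, $\Phi$ and $\Phi'$, which are $j$-adjacent to each other.

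First I would fix two cotype-$\{j\}$ chains $C$ and $D$. Extend $C$ to a flag $\Phi$ and $D$ to a flag $\Psi$. If $\Phi$ and $\Psi$ happen to lie in the same flag orbit, there is $\gamma \in G(P)$ with $\gamma(\Phi) = \Psi$; since $\gamma$ is a lattice automorphism it carries the rank-$i$ face of $\Phi$ to the rank-$i$ face of $\Psi$ for every $i$, hence $\gamma(C) = D$ and we are done. If instead $\Phi$ and $\Psi$ lie in different orbits, then I would replace $\Psi$ by its $j$-adjacent flag $\Psi^j$: by Corollary~\ref{inadjflags}, $\Psi^j$ is in the other orbit from $\Psi$, hence in the same orbit as $\Phi$, and $\Psi^j$ still contains the chain $D$ (the two differ only in the $j$-face). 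Now apply the previous case to $\Phi$ and $\Psi^j$ to get $\gamma \in G(P)$ with $\gamma(\Phi) = \Psi^j$, whence $\gamma(C) = D$.

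The crux of the argument is therefore the dichotomy ``same orbit $\Rightarrow$ use $\gamma$ directly; different orbit $\Rightarrow$ flip one flag across its $j$-face and reduce to the first case,'' and this in turn rests entirely on Corollary~\ref{inadjflags}, which guarantees that $j$-adjacency swaps the two orbits. No obstacle of substance remains: the only thing to be careful about is the bookkeeping that a $j$-adjacent pair of flags shares precisely its cotype-$\{j\}$ chain, which is exactly the diamond condition (P4) applied at rank $j$ together with property (P2) that every flag has $d+2$ faces. I would close by remarking that this also shows $G(P)$ has exactly two orbits on flags as a reflection of its single orbit on cotype-$\{j\}$ chains, each chain splitting into one flag of each orbit — but that observation is not needed for the statement.
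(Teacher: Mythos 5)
Your proposal is correct and is essentially the paper's argument: both rest on the fact that the two flags completing a cotype-$\{j\}$ chain lie in different flag orbits (the paper invokes Corollary~\ref{alternate}, you invoke the equivalent Corollary~\ref{inadjflags}), so the two chains can always be extended to flags in a common orbit and the resulting symmetry carries one chain to the other. The only cosmetic difference is that the paper chooses matching extensions up front, while you extend arbitrarily and flip across the $j$-face afterward if needed.
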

\begin{proof}
Let $\Psi$ and $\Omega$ be two chains of cotype $\{j\}$.
By Corollary~\ref{alternate},
the two $j$-faces which are incident to the $(j-1)$-face
and $(j+1)$-face of $\Psi$ are in different $j$-face orbits.
Recall that the orbit of a given flag is determined entirely by
its $j$-face.
So we may extend $\Psi$ to a flag in either flag orbit.
Similarly, we may extend $\Omega$ to a flag in either orbit.
Thus, we extend $\Psi$ to a flag $\Psi'$
and $\Omega$ to a flag $\Omega'$ such that both are in the same orbit;
then there is a symmetry $\gamma \in G(P)$
so $\gamma(\Psi') = \Omega'$,
and thus $\gamma(\Psi) = \Omega$.
\end{proof}

\begin{clm}\label{vertexorfacet}
If $P$ is a two-orbit $j$-intransitive $d$-polytope,
then $j = 0$ or $j = d-1$.
\end{clm}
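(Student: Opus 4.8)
The plan is to suppose for contradiction that $P$ is $j$-intransitive with $1 \le j \le d-2$, and to derive a contradiction with $P$ having only two flag orbits by producing a flag whose orbit is ambiguous. The idea is that when $j$ is an "interior" rank, the section structure around a $j$-face is rich enough that transitivity at ranks $j-1$ and $j+1$ (which we have, by the definition of $j$-intransitive) forces transitivity at rank $j$ as well, or else forces a third flag orbit.

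First I would fix a $j$-face $F$ of $P$ and consider the section $F/F_{-1}$ (the face lattice of $F$ as a $j$-polytope) and the co-section $F_d/F$ (a $(d-1-j)$-polytope, by the duality construction in the Preliminaries). Since $1 \le j$ and $j \le d-2$, both of these sections have dimension at least $1$. By Corollary~\ref{alternate}, for any $(j+1)$-face $F_{j+1} > F$ and $(j-1)$-face $F_{j-1} < F$, the other $j$-face $H$ between them lies in the opposite $j$-orbit from $F$. Now the key move: $G(P)$ is transitive on $(j-1)$-faces and on $(j+1)$-faces, and I want to exploit that the restricted subgroup acting on a section cannot simultaneously respect the two-$j$-orbit partition and the transitivity at the neighboring ranks. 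Concretely, pick a chain of cotype $\{j\}$ through $F$; by Claim~\ref{cochain-trans}, $G(P)$ is transitive on such chains, so the stabilizer of this chain acts on the two $j$-faces completing it, and by Corollary~\ref{alternate} it must swap the two $j$-orbits or fix each — in either case one gets constraints that, combined with transitivity on $(j-1)$- and $(j+1)$-faces, should let one walk a flag-connectivity path (as in the proof of Claim~\ref{trans}) that produces a flag differing from both orbit representatives.

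The cleanest route is probably this: use Claim~\ref{adjflags} to see that for a two-orbit $j$-intransitive polytope, $i$-adjacency preserves the orbit for every $i \ne j$, so the flag graph breaks into two classes joined only by $j$-edges. Then consider the subgraph of flags sharing a fixed $(j-1)$-face $F_{j-1}$ and a fixed $(j+1)$-face $F_{j+1}$: by the diamond condition there are exactly two $j$-faces in between, giving exactly two flags here, which are $j$-adjacent, hence in opposite orbits — fine so far. But now also fix instead a $(j-1)$-face and vary the $(j+1)$-face: transitivity on $(j+1)$-faces containing that $(j-1)$-face (which follows from fully-transitivity arguments at ranks $\ne j$, i.e. from Theorem~\ref{fullytransreg} applied to the appropriate section) forces the two $j$-faces above $F_{j-1}$ inside one $(j+1)$-face to be "the same" as those inside another, and counting flag orbits within the section $F_{j+1}/F_{j-1}$ — a quadrilateral-like rank-$2$ section — shows it must itself be two-orbit with the $j$-faces alternating, forcing it to be an even polygon; iterating/combining the constraint from the co-section forces $G(P)$ transitive on $j$-faces after all, the contradiction.

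The main obstacle I anticipate is making rigorous the step that transitivity at ranks $j-1$ and $j+1$, together with the diamond condition, actually propagates to rank $j$ when $j$ is interior — i.e., ruling out a consistent "checkerboard" two-coloring of $j$-faces that is globally preserved. This is exactly the place where $j=0$ and $j=d-1$ behave differently: for those extreme ranks the relevant section ($F_1/F_{-1}$ or $F_d/F_{d-2}$, an edge) has no room for such propagation, whereas for interior $j$ the section $F_{j+1}/F_{j-1}$ is a polygon and one must show the two-coloring forces it to be a $2k$-gon with a specific symmetry, then show this is incompatible with the restricted group's transitivity on the $j\pm1$ faces around it unless the $j$-faces are in one orbit. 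I would handle this by a careful flag-connectivity argument localized to the section $F_{j+2}/F_{j-2}$ (valid since $j-2 \ge -1$ and $j+2 \le d$ exactly when $1 \le j \le d-2$), mirroring the path-walking argument in the proof of Claim~\ref{trans} to exhibit a flag adjacent to representatives of both orbits in a way that forces a third orbit.
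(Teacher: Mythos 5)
You correctly set up the contradiction (assume $1 \le j \le d-2$), invoke Corollary~\ref{alternate} for the alternation of the two $j$-orbits, and even name the right section $F_{j+2}/F_{j-2}$ at the very end; up to that point you are on the paper's track. But the mechanism you propose for deriving the contradiction --- a flag-connectivity path-walk ``mirroring the proof of Claim~\ref{trans}'' that exhibits a third flag orbit --- cannot work, and you essentially concede this when you describe ruling out ``a consistent checkerboard two-coloring of $j$-faces'' as the unresolved main obstacle. Such a checkerboard coloring is combinatorially consistent: the edges of the square tiling $\{4,4\}$, or of suitable toroidal maps, can be two-colored so that the colors alternate around every vertex and around every face, and no amount of flag-chasing or appeal to transitivity at ranks $j\pm 1$ produces a third orbit from it. Flag-connectivity and the diamond condition simply do not propagate transitivity from ranks $j-1$ and $j+1$ to rank $j$. (A smaller slip: $F_{j+1}/F_{j-1}$ has rank $1$, not $2$ --- by the diamond condition it is an edge containing exactly two $j$-faces, not a ``quadrilateral-like'' polygon.)

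The missing idea is geometric rather than combinatorial. The paper passes to the rank-$3$ section $Q = F_{j+2}/F_{j-2}$, which, because $P$ is convex, is realizable as a convex polyhedron. By Claim~\ref{cochain-trans} the restricted group is transitive on the vertices and facets of $Q$, so every facet of $Q$ is a $p$-gon and every vertex has the same valence $q$; Corollary~\ref{alternate} makes the two classes of edges of $Q$ (the $j$-faces of $P$) alternate both around each facet and around each vertex, forcing $p$ and $q$ to be even, hence both at least $4$. Euler's theorem then gives the contradiction: every convex polyhedron has a triangular face or a trivalent vertex. This is also exactly why $j = 0$ and $j = d-1$ survive --- the obstruction lives on the $2$-sphere $F_{j+2}/F_{j-2}$, which exists only for interior $j$. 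Without some such appeal to Euler's formula (or an equivalent fact about $3$-polytopes), your proposal does not close.
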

\begin{proof}
Suppose $1 \leq j \leq d - 2$.
Then there is a $(j-2)$-face $F_{j-2}$ contained
in some $(j+2)$-face $F_{j+2}$ in $P$.
The section $Q = F_{j+2}/F_{j-2}$
is a polyhedron.
By Claim~\ref{cochain-trans},
isometries in the restricted group $G_P(Q)$
act transitively on the vertices and facets of $Q$
(corresponding to $(j-1)$-faces and $(j+1)$-faces of $P$,
respectively.)
By vertex transitivity,
every vertex is in the same number $q$ of edges.
By Corollary~\ref{alternate},
the edge orbits alternate across each facet,
so $q$ is even.
By facet transitivity,
each facet is a $p$-gon for some $p$,
and again by Corollary~\ref{alternate}
the edge orbits alternate at each vertex,
so $p$ is even.

However, this contradicts Euler's theorem.
In fact, each polyhedron without triangular facets has
at least one 3-valent vertex \cite[237]{grunbaum1967convex}.
\end{proof}

\begin{clm}\label{regfaces}
If $P$ is a two-orbit $j$-intransitive $d$-polytope,
then all $i$-faces, for $i \leq j$, are regular.
More generally, any section $G/F$, where $G$ is a $k$-face
and $F$ is an $l$-face, is regular if $j \leq l$ or $k \leq j$.
If $l < j < k$, then $G/F$ has two flag orbits under the restricted subgroup 
$G_P(G/F)$.
\end{clm}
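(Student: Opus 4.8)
The plan is to deduce all three assertions from Claim~\ref{cochain-trans} (transitivity on chains of cotype $\{j\}$), Corollary~\ref{alternate} (the two $j$-faces between a given $(j-1)$-face and a given $(j+1)$-face lie in different orbits), and the observation, already contained in the proof of Claim~\ref{freeflags}, that a lattice automorphism of any polytope --- or of any section of $P$, since sections are polytopes --- fixing every face of a single flag must be the identity. The first assertion is the special case $F = F_{-1}$ of the second (then $G/F$ is just the $i$-face $G$, and $k = i \le j$), so it suffices to treat arbitrary sections $G/F$. Note that Theorem~\ref{fullytransreg} will not actually be needed: the argument produces flag-transitivity of $G_P(G/F)$ on $G/F$ directly.

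Everything rests on one lemma: \emph{if $\Phi_1$ and $\Phi_2$ are flags of $P$ sharing all their faces of rank $\le l$ (in particular a face $F$) and all their faces of rank $\ge k$ (in particular a face $G$), and if their rank-$j$ faces lie in a common $G(P)$-orbit, then some $\gamma \in G_P(G/F)$ carries $\Phi_1$ to $\Phi_2$.} To prove it I would delete the rank-$j$ face of each flag to obtain two chains $\Psi_1, \Psi_2$ of cotype $\{j\}$, still sharing all their faces of rank $\le l$ and of rank $\ge k$, and apply Claim~\ref{cochain-trans} to get $\gamma \in G(P)$ with $\gamma(\Psi_1) = \Psi_2$. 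Being a rank-preserving lattice automorphism carrying $\Psi_1$ onto $\Psi_2$, $\gamma$ sends the rank-$r$ face of $\Phi_1$ to the rank-$r$ face of $\Phi_2$ for every $r \ne j$; for $r \le l$ and $r \ge k$ those faces coincide, so $\gamma$ fixes an entire flag of the face $F$ and an entire flag of the section $P/G$, and hence (by the observation above) fixes every face $\le F$ and every face $\ge G$, i.e.\ $\gamma \in G_P(G/F)$. It then remains to control the rank-$j$ faces: $\gamma$ carries the diamond of the two rank-$j$ faces between the rank-$(j-1)$ and rank-$(j+1)$ faces of $\Phi_1$ onto the corresponding diamond for $\Phi_2$; by Corollary~\ref{alternate} each such diamond has one face in each $j$-orbit, and $\gamma$ respects orbits, so the hypothesis that the two rank-$j$ faces lie in the same orbit forces $\gamma$ to match them. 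Thus $\gamma(\Phi_1) = \Phi_2$. (When $j \le l$ or $j \ge k$ the rank-$j$ face is itself one of the shared faces, and the same diamond argument is instead what shows $\gamma$ fixes that shared face --- e.g.\ $F$ itself when $j = l$ --- but the mechanism is identical.)

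Granting the lemma, the ``regular'' case is immediate: when $j \le l$ or $k \le j$, any two flags of $G/F$ extend to flags $\Phi_1, \Phi_2$ of $P$ by prepending one fixed flag of $F$ and appending one fixed flag of $P/G$, and since $j$ then lies outside the open interval $(l,k)$ their rank-$j$ faces are literally equal; the lemma supplies $\gamma \in G_P(G/F)$ carrying one section-flag onto the other, so $G_P(G/F)$ is flag-transitive on $G/F$ and $G/F$ is regular. For $l < j < k$: assigning to each flag of $G/F$ the $G(P)$-orbit of its rank-$(j-l-1)$ face (a $j$-face of $P$) gives a $G_P(G/F)$-invariant function taking the two possible values --- it separates $(j-l-1)$-adjacent section-flags, whose rank-$(j-l-1)$ faces form a diamond in $P$ and so lie in different orbits by Corollary~\ref{alternate} --- so there are at least two flag orbits; conversely two section-flags with rank-$(j-l-1)$ faces in the same orbit extend to flags $\Phi_1, \Phi_2$ of $P$ to which the lemma applies, so there are exactly two.

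The step I expect to be the real obstacle is the middle of the lemma. Claim~\ref{cochain-trans} yields only a $\gamma$ fixing the particular faces occurring in the two cotype-$\{j\}$ chains, which is a priori weaker both than membership in $G_P(G/F)$ (that demands fixing \emph{all} faces $\le F$ and $\ge G$, not merely those in one flag) and than $\gamma(\Phi_1) = \Phi_2$ (that demands taming the deleted rank-$j$ face). Bridging the first gap needs the ``fixing one flag of a section kills the section's automorphism'' principle, and bridging the second needs Corollary~\ref{alternate} together with orbit-preservation; verifying that this goes through uniformly, including the boundary positions $j = l$ and $j = k$, is where the care lies, and the remaining checks (that the extensions are genuine flags, that the relevant section ranks are proper, etc.) are routine.
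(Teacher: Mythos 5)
Your proof is correct. The skeleton is the same as the paper's (extend two flags of the section $G/F$ to flags of $P$ that agree in all ranks $\le l$ and $\ge k$, produce a global symmetry carrying one to the other, and restrict it to the section), but the mechanism for producing that symmetry differs. The paper leans on a single observation you never invoke directly: since $P$ has exactly two flag orbits and exactly two orbits of $j$-faces, the orbit of a flag is determined by the orbit of its $j$-face; hence once the two extended flags have $j$-faces in the same orbit (in the regular case they are literally equal), a $\gamma\in G(P)$ with $\gamma(\Phi')=\Psi'$ exists in one step, rank-$j$ face included. You instead manufacture $\gamma$ from Claim~\ref{cochain-trans} applied to the cotype-$\{j\}$ chains and must then recover control of the deleted rank-$j$ face via the diamond and Corollary~\ref{alternate} --- a longer route to the same symmetry, and one whose boundary cases $j=l$, $j=k$ you rightly flag and correctly dispatch. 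What the detour buys is precision about the group: the paper's proof only exhibits a $\gamma$ fixing the particular faces of $\Phi'$ of ranks $\le l$ and $\ge k$, i.e.\ an element of the setwise stabilizer of the section, while the statement asserts two orbits under the restricted subgroup $G_P(G/F)$, which must fix \emph{every} face of $F$ and \emph{every} face above $G$ (and transitivity on each class under a larger group does not automatically descend to a subgroup). Your appeal to the free action of automorphisms on the flags of $F$ and of $P/G$ upgrades $\gamma$ to a genuine element of $G_P(G/F)$ and closes that small gap. Both arguments are sound.
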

\begin{proof}
Since there are only two flag orbits,
and two classes of $j$-faces,
the orbit of a given flag is determined entirely by
its $j$-face.
Suppose $G/F$ is a section as described
and we do not have $l < j < k$.
Choose a base flag $\Phi$ of $G/F$
and extend it to a flag $\Phi'$ of $P$.
Now any flag $\Psi$ of $G/F$ may be extended
to a flag $\Psi'$ of $P$ which agrees with $\Phi'$
for all $i$-faces with $i \leq l$ or $i \geq k$.
In particular, $\Phi'$ and $\Psi'$ share the same $j$-face,
so there is an isometry $\gamma \in G(P)$
such that $\gamma(\Phi') = \Psi'$.
Then $\gamma$ restricts to a symmetry of $G/F$
carrying $\Phi$
to $\Psi$.
Hence $G/F$ is regular.

On the other hand, if $l < j < k$,
then $G/F$ contains a $(j-1)$-face $F_{j-1}$ of $P$
and a $(j+1)$-face $F_{j+1}$ of $P$
which contains $F_{j-1}$.
By Corollary~\ref{alternate},
the two $j$-faces $H$ of $P$
with $F_{j-1} < H < F_{j+1}$
are in different orbits.
Thus $G/F$ has at least two flag orbits under those isometries in $G(P)$
which restrict to $G/F$.
On the other hand, for any two flags $\Phi$ and $\Psi$ of $G/F$ which contain 
the same kind of $j$-face of $P$,
we may extend these to flags $\Phi'$ and $\Psi'$ of $P$
which agree on all $i$-faces with $i \leq l$ and $i \geq k$.
Then an isometry $\gamma \in G(P)$
exists with $\gamma(\Phi') = (\Psi')$,
and this $\gamma$ restricts to $G/F$
where it takes $\Phi$ to $\Psi$.
Hence $G/F$ has two flag orbits
under those transformations in $G(P)$ which restrict to $G/F$.
\end{proof}

Note that those sections in Claim~\ref{regfaces}
with two flag orbits under the restricted subgroup
are either two-orbit polytopes or regular. Their full group of symmetries
includes the restricted subgroup, but may be bigger.
If the section is in fact two-orbit, then its symmetry group
agrees with the restricted subgroup. In particular,
if a face $F$ of a two-orbit $j$-intransitive polytope
is two-orbit, then $F$ is also $j$-intransitive;
note than then $j = 0$, by Claim~\ref{vertexorfacet}.

\section{Two Dimensions}
Suppose $P$ is a two-orbit polygon.
If $P$ does not have all edges of the same length,
then it has two distinct edge lengths;
if it had three or more, then there would be three or more flag orbits.
In this case, $P$ is not edge-transitive, so it must be vertex-transitive.
Then no two edges of the same length may be adjacent,
since in that case, by vertex-transitivity, all edges would be the same length.
So $P$ must alternate edges of two distinct lengths,
and by vertex-transitivity all angles are the same.

On the other hand, suppose $P$ does have all edges the same length.
If the angle at each vertex is the same,
then $P$ would be regular.
Therefore, $P$ has at least two distinct angles;
it has at most two, since there at most two vertex orbits.
Then $P$ is not vertex-transitive,
so it must be edge-transitive,
which implies that $P$ alternates between two distinct angles.

We have shown that every two-orbit convex polygon must be of one of the two types described above. It is not hard to see that, moreover, such $2n$-gons exist for each $n \geq 2$. The existence of non-regular rectangles is well known. For each $n \geq 3$, a polygon of the first type may be constructed from a regular $n$-gon by truncation, i.e.\ chopping off a corner at each vertex. In the top row of Figure~\ref{fig:polygons}, you may see how the hexagon is a truncated equilateral triangle, and the octagon is a truncated square.

The existence of each $2n$-gon of the second type is then clear, since they are the duals of the polygons of the first type; i.e.\ they may be constructed by taking the convex hull of vertices placed at the midpoint of each edge of a polygon of the first type.

It is also clear that such polygons are, indeed, two-orbit.
Let us consider a polygon $P$ of the first type.
It then follows for the second type by duality.
Since $P$ is not edge-transitive, it has at least two flag orbits.
Since $P$ is a truncated regular $n$-gon, it has (at least) all the symmetries of the regular $n$-gon, which has order $2n$.
But $P$ has $4n$ flags ($2n$ vertices, each in 2 edges),
so $P$ has at most $4n/2n = 2$ flag orbits.
Therefore $P$ is a two-orbit polygon.

\section{Three Dimensions}
A \emph{quasiregular} polyhedron is vertex-transitive and has exactly two kinds of facets, which are regular and alternate around each vertex. By Claims~\ref{trans} and \ref{regfaces},
any 2-intransitive two-orbit polyhedron is vertex-transitive, edge-transitive,
and has regular facets in two orbits.
The two types of facet must alternate around each vertex,
i.e.\ each edge must be incident to one facet of each type,
by edge-transitivity.
Thus any 2-intransitive two-orbit polyhedron is quasiregular.
But there are only two quasiregular polyhedra: the cuboctahedron
and the icosidodecahedron, two of the Archimedean solids \cite[18]{coxeter1973regular}.

We may verify that these are two-orbit polyhedra. The cuboctahedron has at least two flag orbits, since it is not regular, having both square and triangular faces.
It has 12 vertices, each incident to 4 edges, and each edge is in 2 faces,
so it has $12 \cdot 4 \cdot 2 = 96$ flags.
The cuboctahedron may be formed by truncating each vertex of the 3-cube
at the midpoints of the edges,
so it retains all the symmetries of the cube,
a group of order 48.
Hence the cuboctahedron has at most $96/48 = 2$ orbits,
and thus is a two-orbit polyhedron (and also combinatorially two-orbit.)

The icosidodecahedron has at least two flag orbits,
since it is not regular, having both triangular and pentagonal faces.
It has 30 vertices, each in 4 edges, and each edge is in 2 faces,
so it has $30 \cdot 4 \cdot 2 = 240$ flags.
The icosidodecahedron may be formed by truncating each vertex of the
dodecahedron at the midpoints of the edges,
so it retains all the symmetries of the dodecahedron,
a group of order 120.
Hence the icosidodecahedron has at most $240/120 = 2$ orbits,
and thus is a two-orbit polyhedron (and also combinatorially two-orbit.)

Any two-orbit polyhedron which is 0-intransitive must be dual
to one of these two, so we have the rhombic dodecahedron, dual to the cuboctahedron, and the rhombic triacontahedron, dual to the icosidodecahedron. As duals to Archimedean solids, these are Catalan solids.

Rather than using the list of quasiregular polyhedra,
it is possible to arrive at candidates for 0-intransitive or 2-intransitive two-orbit polyhedra
by considering all the edge-transitive polyhedra. It turns out there are only nine: the five platonic solids, the cuboctahedron, the icosidodecahedron, the rhombic dodecahedron, and the rhombic triacontahedron \cite{graver1997locally,grunbaum1987edge}.

By Claim~\ref{vertexorfacet},
there are no 1-intransitive two-orbit polyhedra.
In fact, polyhedra which are vertex-transitive and facet-transitive have a name,
the \emph{noble} polyhedra,
and the only non-regular ones (i.e. the 1-intransitive polyhedra)
are disphenoid tetrahedra,
which are tetrahedra with non-equilateral triangular faces 
\cite[26]{bruckner1906}.
It is not hard to see that, if not regular,
a tetrahedron has at least three flag orbits.

Hence the cuboctahedron, icosidodecahedron, rhombic dodecahedron, and rhombic triacontahedron are the only two-orbit polyhedra.
The same result is found in \textcite[427]{orbanic2010map} as a consequence of Theorem 6.1 therein,
stating that every 2-orbit map on the sphere is either the medial of a regular map on the sphere, or dual to one.

\section{Higher dimensions}
Suppose $P$ is a $j$-intransitive two-orbit $d$-polytope with $d \geq 4$;
by Claim~\ref{vertexorfacet} $j$ is either $0$ or $d-1$.
Any two-orbit 0-intransitive polytope is dual to a two-orbit $(d-1)$-intransitive polytope,
so we shall restrict our attention to the latter case.
Such a polytope is vertex-transitive, and by Claim~\ref{regfaces} has regular facets.
This is the definition used by \textcite{gosset1900regular} for \emph{semiregular} 
polytopes.
In his 1900 paper he gives a complete list of all the semiregular polytopes.
The list was proved to be complete in \textcite{Blind1991semireg}.

There are only seven semiregular convex polytopes in dimensions greater than three.
There are three 4-polytopes: the rectified 4-simplex, the snub 24-cell, and the rectified 600-cell.
The rectified 4-simplex, which Gosset called ``tetroctahedric,''
is the convex hull of the midpoints of the edges of the 4-simplex.
The facets are tetrahedra and octahedra.
It has 360 flags,
with 10 vertices, each in 6 edges, each edge in 3 ridges, and each ridge in 2 facets.
It has the same symmetry group as the 4-simplex, of order 120; hence it has three flag orbits.

The rectified 600-cell, which Gosset called ``octicosahedric,'' 
is the convex hull of the midpoints of the edges of the 600-cell.
The facets are octahedra and icosahedra.
It has 43,200 flags,
with 720 vertices, each in 10 edges, each edge in 3 ridges and each ridge in 2 facets.
It has the same symmetry group as the 600-cell, of order 14,400;
hence it has three flag orbits.

The snub 24-cell, which Gosset called ``tetricosahedric,'' has icosahedra
and tetrahedra for facets.
It has 96 vertices, each in 9 edges;
6 of these edges are in 3 ridges,
and the other 3 edges are in 4 ridges.
(This already makes it clear that there are at least two orbit classes of edges,
as well as at least two orbit classes of facets,
so it cannot be two-orbit.)
Each ridge is in 2 facets.
Hence there are 5,760 flags.
It has half the symmetries of the 24-cell, leaving 576.
So it has ten flag orbits.

The remaining examples form Coxeter's $k_{21}$ family \cites[\S 11.8]{coxeter1973regular}{coxeter1988regular}, 
with one each in dimensions 5 through 8.
They are the 5-demicube, or $1_{21}$, Gosset's ``5-ic Semi-regular'';
$2_{21}$ or ``6-ic Semi-regular''; $3_{21}$ or ``7-ic Semi-regular''; and $4_{21}$ or ``8-ic Semi-regular''.
Each of these has the preceding one for its vertex figure,
starting with the rectified 4-simplex (which may also be called $0_{21}$)
as the vertex figure of the 5-demicube.
Of course, by Claim~\ref{regfaces},
if any member of this family were two-orbit,
then the previous member (being a section) would either be two-orbit or regular.
So by induction, none of these polytopes are two-orbit.
In fact, each has three flag orbits.

Thus, no two-orbit convex polytopes exist in more than three dimensions.

In \cite[409--411]{conway2008symmetries}, Conway et al. say
that the
$n$-dimensional demicube, i.e.\ the convex hull of alternate vertices of
the $n$-cube (which they call a hemicube), has $n-2$ flag orbits. So
the 4-demicube should be two-orbit. The
4-demicube is described specifically as a 4-crosspolytope ``but with only half its symmetry.''
This apparently contradicts our result!

%

However, if the 4-cube has for its vertices
the 16 points in $\E^4$ with all coordinates 0 or 1,
then the vertices of the 4-demicube are $(0,0,0,0)$,
$(1,1,1,1)$, and all vectors with two 0's and two 1's.
Hence if $x$ is a vertex, so is $\1 - x$, where $\1 = (1,1,1,1)$.
Grouping the 8 vertices in pairs $(x, \1 - x)$,
we find four axes
which are mutually perpendicular.
Thus we have four antipodal pairs of vertices of a regular
4-crosspolytope.
Hence the ``two-orbit'' 4-demicube is actually a regular 4-crosspolytope
with artificially restricted symmetries, essentially by coloring the
facets depending whether they were formed inside a facet, or at a missing
vertex, of the 4-cube.

\section{Tilings}\label{sec:tilings}
A \emph{tiling} of $d$-dimensional Euclidean space $\E^d$, also called a tessellation or a honeycomb,
is a countable collection of subsets (called \emph{tiles}) of $\E^d$ which cover $\E^d$ without gaps or overlaps;
that is, the union of the tiles is $\E^d$, and the interiors of the tiles are pairwise disjoint.
Here, we consider only locally finite face-to-face tilings by convex polytopes,
meaning that all the tiles must be convex polytopes,
every compact subset of $\E^d$ meets only finitely many tiles,
and the intersection of any two tiles is a face of both (possibly the empty face).
The face lattice of a tiling of $d$-dimensional space meets all the criteria defining
an abstract polytope of rank $d+1$,
and we call it a rank $(d+1)$ tiling.
The $d$-dimensional tiles are the facets.
A rank 3 tiling is called a \emph{plane tiling},
and a rank 2 tiling is called an \emph{apeirogon}. The latter
necessarily consists of infinitely many edges (line segments) covering the line,
and has been described as the limit of a sequence of $n$-gons as $n \to \infty$.

A \emph{normal} tiling has
\begin{itemize}
\item
tiles which are homeomorphic to closed balls,
\item
two positive radii $r$ and $R$ such that every tile contains a ball of radius $r$
and is contained in a ball of radius $R$,
and
\item
 the property that the intersection of any two tiles is empty or connected.
\end{itemize}
A two-orbit tiling has at most two congruence classes of tiles,
so that the tiles are uniformly bounded (above and below) by balls of
two given radii; together with convex polytopes as tiles, this is sufficient
to establish that the tiling is normal. This rules out certain pathological possibilities for tilings.

Claim~\ref{trans} still applies: if a two-orbit tiling is not fully transitive,
then it is not transitive on the faces of exactly one dimension, say $j$,
and we call it $j$-intransitive.
However, Theorem~\ref{fullytransreg} does not apply;
the proof depends on the fact that the vertices of a vertex-transitive polytope
lie on a sphere,
which is not the case for a tiling.
So fully transitive two-orbit tilings are a possibility (and some exist.)
Claim~\ref{freeflags} no longer makes sense,
since the symmetry group and the set of flags are both infinite,
but Claim~\ref{adjflags} and its corollaries still hold
for any $j$-intransitive two-orbit tilings.
Finally, Claim~\ref{regfaces} applies:
the faces and sections of a two-orbit tiling have at most two orbits.

Following \cite{grunbaum1986tilings}, we say two tilings are equal
if one can be mapped onto the other by a uniform scale transformation
followed by an isometry.

\subsection{Apeirogons}
There is one two-orbit tiling of the line, which varies by a single real
parameter greater than one:
an apeirogon alternating between two distinct edge lengths. 
Note that the construction of well-behaved duals does not work,
in general, for tilings, as it does for polytopes.
For example, if one constructs a ``dual'' to this two-orbit apeirogon
by taking edge midpoints for vertices, one obtains a regular
apeirogon, which is then self-dual!

This tiling is combinatorially regular.

\subsection{Plane tilings}
We consider four cases of plane tilings, based on their transitivity properties.

\subsubsection{Fully transitive}
\textcite{grunbaum1986tilings} contains the full list of isohedral (i.e.\ tile-transitive) plane tilings (Table 6.1), isotoxal (i.e.\ edge-transitive) plane tilings (Table 6.4), and isogonal (vertex-transitive) plane tilings (Table 6.3). There are only four plane tilings realizable by convex tiles which have all three properties:
the three regular plane tilings
and a tiling by translations of a rhombus, labeled IH74 as an isohedral tiling,
IG74 as an isogonal tiling, and IT20 as an isotoxal tiling.
On \cite[311]{grunbaum1986tilings} it is confirmed that this rhombus tiling
is the only non-regular fully transitive tiling realizable by convex tiles.
Figure~\ref{fig:rhombus} shows a portion of this tiling, with flags of one orbit shaded.
For a given flag $\Phi$, both the 0-adjacent flag $\Phi^0$ and the 2-adjacent flag $\Phi^2$ are in the other orbit, whereas the 1-adjacent flag $\Phi^1$ remains in the same orbit;
thus with the notation of \textcite{hubard2010two} this tiling is in class $2_{1}$.

\begin{figure}[h]
\begin{tikzpicture}[xslant=.5,yscale=0.894]
\foreach \x in {1,2,3,4} {
 \foreach \y in {1,2,3} {
  \fill[gray!50] (\x,\y) -- (\x,\y+.5) -- (\x+.5,\y+.5) -- cycle;
  \fill[gray!50] (\x,\y) -- (\x+.5,\y) -- (\x+.5,\y+.5) -- cycle;
  \fill[gray!50] (\x+1,\y+1) -- (\x+.5,\y+1) -- (\x+.5,\y+.5) -- cycle;
  \fill[gray!50] (\x+1,\y+1) -- (\x+1,\y+.5) -- (\x+.5,\y+.5) -- cycle;
  \draw[white](\x,\y) -- (\x+1,\y+1);
 }
}
\draw[very thick] (.5,.5) grid (5.5,4.5);
\end{tikzpicture}
\caption{The fully-transitive rhombus tiling}\label{fig:rhombus}
\end{figure}
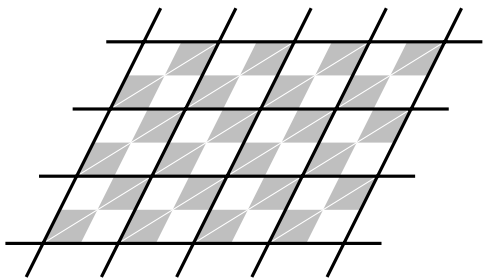

A family of unequal versions of this tiling may be obtained by varying a single real parameter greater than one (the ratio of the diagonals of the rhombus.)
The tiling is self-dual when taking tile midpoints for vertices. It is combinatorially regular.

\subsubsection{2-intransitive}
The facets of a 2-intransitive two-orbit tiling must be regular,
by Claim~\ref{regfaces}.
By edge-transitivity,
the two facets bordering each edge are from different orbits;
hence they alternate around each vertex.
By vertex-transitivity,
each vertex appears in the same kinds of tiles,
which appear in the same order around each vertex;
a common notation for such a situation
is $(p.q.r\ldots)$ to indicate that each vertex $v$ is in a $p$-gon
adjacent to a $q$-gon (containing $v$) adjacent to an $r$-gon, etc.
An exponent may be used to indicate repetition;
for instance, the regular tiling by equilateral triangles, $(3.3.3.3.3.3)$,
is denoted $(3^6)$.

If six facets appear at each vertex, then they must all be triangles,
since replacing any triangle by a regular $n$-gon with $n \geq 4$
will not fit in the plane. The only tiling with six equilateral triangles at every vertex is the regular tiling $(3^6)$.
Hence there must be exactly four facets at each vertex.

If none of the facets are triangles,
then each has at least four sides.
Four squares fit exactly around a vertex,
but replacing any squares by regular $n$-gons with $n \geq 5$
will not fit in the plane.
The only tiling with four squares at every vertex is the regular tiling $(4^4)$.
Hence there must be at least some triangles.

If all four faces at each vertex are equilateral triangles,
there is too much angular deficiency to tile the plane;
indeed, the only such figure is the regular octahedron, $(3^4)$.

If triangles alternate with squares,
the resulting figure is the cuboctahedron, $(3.4.3.4)$.
If triangles alternate with pentagons,
the resulting figure is the icosidodecahedron, $(3.5.3.5)$.
(This is, in brief, the proof that these are the only quasiregular polyhedra.)

If triangles alternate with hexagons,
we do obtain a plane tiling, denoted $(3.6.3.6)$.
This is one of the 11 \emph{uniform} plane tilings,
also called \emph{Archimedean} tilings.
This tiling, seen in Figure~\ref{fig:trihex}, is sometimes called ``trihexagonal'' or ``hexadeltille.''

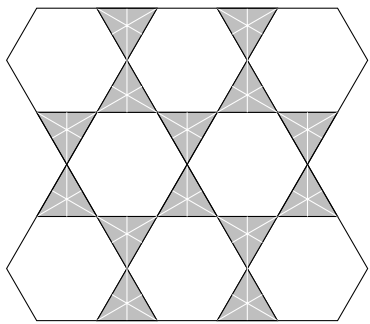
\begin{figure}[h]
\begin{tikzpicture}[scale=0.8]
\foreach \x in {0,2,4} {
  \foreach \y in {0,3.464} {
    \draw (\x,\y) -- (\x+1,\y) -- (\x+1.5,\y+.866) -- (\x+1,\y+1.732) -- (\x,\y+1.732) -- (\x-.5,\y+.866) -- cycle;
  }
  \foreach \y in {1.732} {
    \filldraw[fill=gray!50] (\x,\y) -- (\x+1,\y) -- (\x+.5,\y+.866) -- cycle;
    \filldraw[fill=gray!50] (\x,\y+1.732) -- (\x+1,\y+1.732) -- (\x+.5,\y+.866) -- cycle;
    \draw[white] (\x,\y) -- +(30:.866)
                 (\x+1,\y) -- +(150:.866)
                 (\x+.5,\y) -- (\x+.5,\y+1.732)
                 (\x,\y+1.732) -- +(-30:.866)
                 (\x+1,\y+1.732) -- +(210:.866);
  }
}
\foreach \x in {1,3} {
  \foreach \y in {1.732} {
    \draw (\x,\y) -- (\x+1,\y) -- (\x+1.5,\y+.866) -- (\x+1,\y+1.732) -- (\x,\y+1.732) -- (\x-.5,\y+.866) -- cycle;
  }
  \foreach \y in {0,3.464} {
    \filldraw[fill=gray!50] (\x,\y) -- (\x+1,\y) -- (\x+.5,\y+.866) -- cycle;
    \filldraw[fill=gray!50] (\x,\y+1.732) -- (\x+1,\y+1.732) -- (\x+.5,\y+.866) -- cycle;
    \draw[white] (\x,\y) -- +(30:.866)
                 (\x+1,\y) -- +(150:.866)
                 (\x+.5,\y) -- (\x+.5,\y+1.732)
                 (\x,\y+1.732) -- +(-30:.866)
                 (\x+1,\y+1.732) -- +(210:.866);
  }
}
\draw (1,5.196) -- (2,5.196)  (3,5.196) -- (4,5.196)
      (1,0) -- (2,0)  (3,0) -- (4,0)
      (0,3.464) -- (.5,2.598) -- (0,1.732)
      (5,3.464) -- (4.5,2.598) -- (5,1.732);
\end{tikzpicture}
\caption{The trihexagonal tiling}\label{fig:trihex}
\end{figure}

If we replace the hexagons by regular $n$-gons with $n \geq 7$,
the total angles are excessive to fit in the plane.
Hence $(3.6.3.6)$ is the unique two-orbit 2-intransitive plane tiling.
\textcite[60]{coxeter1973regular} calls it by the extended Schl\"afli symbol 
$\vsch{3}{6}$, which is suggestive
of the construction by taking the midpoints of the edges
of the regular tiling $\{3,6\}$,
or equivalently of its dual, the regular tiling $\{6,3\}$.
He describes it as a quasiregular tessellation.

It is combinatorially two-orbit.
Taking the dual by using tile midpoints for vertices works well
and results in the rhombille tiling detailed below.

\subsubsection{1-intransitive}
By facet-transitivity,
each facet has the same number of sides, say $p$,
and by vertex-transitivity,
each vertex is incident to the same number of edges, say $q$.
Thus a 1-intransitive plane tiling has a Schl\"afli symbol $\{p,q\}$.
Since edges of the two orbits alternate
at each vertex of a tile,
$p$ and $q$ are both even;
the only possible symbol is $\{4,4\}$.
The tiles must be regular or two-orbit.
The only tiling by squares is regular;
so the tiles must be two-orbit 4-gons, i.e.\ rectangles or rhombi.

It follows from vertex-transitivity, or from adding angle defects,
that rhombi must be arranged with two acute angles and two obtuse angles
at each vertex.
In the case that the two angle types alternate, we obtain the tiling in
Figure~\ref{fig:rhombus}, which we know to be fully transitive.
In the case that the obtuse angles are adjacent to each other,
and the acute angles are adjacent to each other,
we do obtain a 1-intransitive plane tiling.
The rhombi are arranged in strips which alternate direction.
However, this tiling actually has four orbits.
Indeed, in a 1-intransitive two-orbit tiling,
the orbit of a flag is determined entirely by the edge it contains;
if any face is also two-orbit,
so that its symmetry group is the same as the restricted subgroup,
then its flag orbits must also be determined by edges,
and not vertices as in the case of a rhombus.

This leaves only the tiling by copies of a rectangle. 
This is the unique two-orbit 1-intransitive family of plane tilings,
and varies by a single real parameter greater than one.
It is self-dual
and combinatorially regular,
being isomorphic to the square tiling $(4^4)$.
Figure~\ref{fig:rect} shows a patch of this tiling, with flags of one orbit shaded.

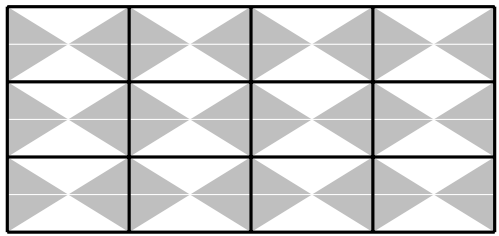
\begin{figure}[h]
\begin{tikzpicture}[xscale=1.62]
\foreach \x in {1,2,3,4} {
 \foreach \y in {1,2,3} {
  \fill[gray!50] (\x,\y) -- (\x,\y+.5) -- (\x+.5,\y+.5) -- cycle;
  \fill[gray!50] (\x,\y+1) -- (\x,\y+.5) -- (\x+.5,\y+.5) -- cycle;
  \fill[gray!50] (\x+1,\y) -- (\x+1,\y+.5) -- (\x+.5,\y+.5) -- cycle;
  \fill[gray!50] (\x+1,\y+1) -- (\x+1,\y+.5) -- (\x+.5,\y+.5) -- cycle;
  \draw[white](\x,\y+.5) -- (\x+1,\y+.5);
 }
}
\draw[very thick] (1,1) grid (5,4);
\end{tikzpicture}
\caption{The 1-intransitive rectangle tiling}\label{fig:rect}
\end{figure}

\subsubsection{0-intransitive}
It is tempting to say that any 0-intransitive tiling
must be dual to a 2-intransitive one.
However, \textcite{grunbaum1986tilings} admonish us
that for tilings, no duality theorem exists
which would allow us to make such statements!
Nonetheless, it turns out that the only 0-intransitive two-orbit tiling
is indeed dual to the uniform tiling $(3.6.3.6)$.
We can confirm this by
again turning to the tables of isohedral 
and isotoxal tilings in \cite{grunbaum1986tilings};
the only additional tiling realizable by convex tiles with both properties
is denoted IH37 as an isohedral tiling
and IT11 as an isotoxal tiling.

This is a tiling by copies of a rhombus, which can be
viewed as dividing the hexagons of the regular tiling $(6^3)$ into three rhombi
each. It is called ``rhombille'' or ``tumbling blocks,'' and is familiar
as the visual illusion of a stair-case of blocks which can be seen in two ways.
It is combinatorially two-orbit.

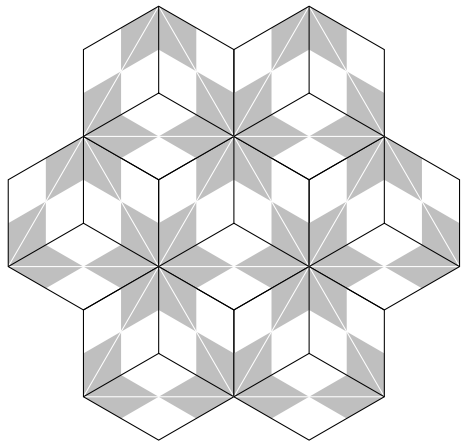
\begin{figure}[h]
\begin{tikzpicture}
\foreach \x in {0,2,4} {
  \fill[gray!50] (\x+.5,1.443) -- (\x+.5,.866) -- (\x+1,1.155) -- (\x+1,1.732) -- cycle;
  \fill[gray!50] (\x+1,1.155) -- (\x+1.5,.866) -- (\x+1.5,1.443) -- (\x+1,1.732) -- cycle;
  \fill[gray!50] (\x,0) -- (\x,.577) -- (\x+.5,0.866) -- (\x+.5,.289) -- cycle;
  \fill[gray!50] (\x,0) -- (\x+.5,-.289) -- (\x+1,0) -- (\x+.5,.289) -- cycle;
  \fill[gray!50] (\x+2,0) -- (\x+1.5,.289) -- (\x+1,0) -- (\x+1.5,-.289) -- cycle;
  \fill[gray!50] (\x+2,0) -- (\x+1.5,.289) -- (\x+1.5,.866) -- (\x+2,.577) -- cycle;
  \draw[white] (\x,0) -- (\x+1,1.732) -- (\x+2,0) -- cycle;
  \draw (\x,0) -- (\x+1,-.577) -- (\x+2,0) -- (\x+2,1.155) -- (\x+1,1.732) -- (\x,1.155) -- cycle;
  \draw (\x+1,1.732) -- (\x+1,.577) -- (\x,0)  (\x+1,.577) -- (\x+2,0);
}
\foreach \x in {1,3} {
  \foreach \y in {-1.732,1.732} {
  \fill[gray!50] (\x+.5,\y+1.443) -- (\x+.5,\y+.866) -- (\x+1,\y+1.155) -- (\x+1,\y+1.732) -- cycle;
  \fill[gray!50] (\x+1,\y+1.155) -- (\x+1.5,\y+.866) -- (\x+1.5,\y+1.443) -- (\x+1,\y+1.732) -- cycle;
  \fill[gray!50] (\x,\y) -- (\x,\y+.577) -- (\x+.5,\y+0.866) -- (\x+.5,\y+.289) -- cycle;
  \fill[gray!50] (\x,\y) -- (\x+.5,\y-.289) -- (\x+1,\y) -- (\x+.5,\y+.289) -- cycle;
  \fill[gray!50] (\x+2,\y) -- (\x+1.5,\y+.289) -- (\x+1,\y) -- (\x+1.5,\y-.289) -- cycle;
  \fill[gray!50] (\x+2,\y) -- (\x+1.5,\y+.289) -- (\x+1.5,\y+.866) -- (\x+2,\y+.577) -- cycle;
    \draw[white] (\x,\y) -- (\x+1,\y+1.732) -- (\x+2,\y) -- cycle;
    \draw (\x,\y) -- (\x+1,\y-.577) -- (\x+2,\y) -- (\x+2,\y+1.155) -- (\x+1,\y+1.732) -- (\x,\y+1.155) -- cycle;
    \draw (\x+1,\y+1.732) -- (\x+1,\y+.577) -- (\x,\y)  (\x+1,\y+.577) -- (\x+2,\y);
  }
}
\end{tikzpicture}
\caption{The rhombille tiling}\label{fig:rhombille}
\end{figure}

\subsection{Tilings of three-space}
A tiling in $\E^d$ is said to be \emph{uniform}
if it is vertex-transitive
and has uniform $d$-polytopes as tiles \cite{coxeter1940regular}.
Recall that uniform polytopes may be defined inductively,
declaring uniform polygons to be regular
and uniform polytopes of rank 3 or higher
to be vertex-transitive with uniform facets.

A 3-intransitive two-orbit tiling of 3-space
has regular polyhedral tiles
and is vertex-transitive,
which means that it is a uniform tiling.
\textcite{grunbaum1994uniform} 
listed all 28 uniform tilings of 3-space.
Of these, only one is two-orbit:
the tetrahedral-octahedral honeycomb, \#1 on Gr{\"u}nbaum's list, also
called ``alternated cubic,'' ``Tetroctahedrille,''
or ``octatetrahedral.''
 This is
3-intransitive. Coxeter describes it as the unique quasiregular
honeycomb \cite[69]{coxeter1973regular}
and assigns it the modified Schl{\"a}fli 
symbol $\{3, \stak{3}{4}\}$
and an abbreviated symbol $h \delta_4$ \cite[402]{coxeter1940regular}. 
Being semiregular (with regular tiles and a vertex-transitive group),
it also appears in Gosset's list \cite{gosset1900regular}
as the ``simple tetroctahedric check.''
\textcite{monson2012semiregular} describe this tiling at length.
It has 6 octahedra and 8 tetrahedra meeting at each vertex;
the vertex figure is a cuboctahedron.
The corresponding ``net,'' the 1-skeleton of the tiling,
is named \textbf{fcu} by crystallographers in \cite{delgado2002three},
where this tiling is conjectured to be
the unique one with transitivity 1112,
i.e.\ whose symmetry group has one orbit on vertices,
edges, and 2-faces, and two orbits on tiles.

A 2-intransitive tiling of 3-space has regular polygon 2-faces
and is vertex-transitive.
Moreover, the facets are regular or 2-intransitive two-orbit,
hence vertex-transitive.
So such a tiling is uniform;
but we already found the only two-orbit uniform tiling
and this was 3-intransitive.

A 1-intransitive tiling of 3-space
has two kinds of edge, which must alternate around a 2-face,
so each 2-face has evenly many sides.
The facets are regular or 1-intransitive two-orbit,
and the only such polyhedron
with even-sided 2-faces is the cube.
The only face-to-face tiling by cubes is the regular one.
So no such tilings exist.

A 0-intransitive tiling of 3-space
has two kinds of vertex,
and every edge must be incident to one of each
(by edge-transitivity),
so each 2-face has evenly many sides.
The facets are regular or 0-intransitive two-orbit;
the only possibilities
are the cube, the rhombic dodecahedron,
or the rhombic triacontahedron.
As we already mentioned,
the only face-to-face tiling by cubes is regular.
The rhombic triacontahedron has a dihedral angle of
$4 \pi / 5$, so it is impossible to fit
an integral number of them around an edge in 3-space.
However, the rhombic dodecahedron, with a dihedral angle
of $2 \pi / 3$, does form a two-orbit tiling of 3-space
in a unique way. 
This tiling (called the rhombic dodecahedral honeycomb)
is dual to the tetrahedral-octahedral honeycomb above.
The corresponding net is named \textbf{flu} in \cite{delgado2002three},
and described as the structure of fluorite ($\text{CaF}_2$.)
It is conjectured there to be the unique tiling with transitivity 2111.

Suppose $\sT$ is a fully transitive two-orbit tiling;
then the facets are regular or two-orbit, and all of one type.
Since $\sT$ is vertex-transitive,
if the facets were regular,
$\sT$ would be uniform,
and we have already checked all the uniform tilings.
Thus the facets must be two-orbit.
Since $\sT$ is 2-face-transitive,
every 2-face is the same, which rules out the cuboctahedron or icosidodecahedron as facets.
The remaining possibilities are the rhombic dodecahedron,
which only appears in the 0-intransitive tiling already listed,
and the rhombic triacontahedron, which as mentioned does not tile 3-space.

\subsection{Higher dimensions}
For a rank $(d+1)$ tiling $\sT$ with $d \geq 4$,
the facets and vertex figures are $d$-dimensional polytopes
with at most two orbits.
Since no two-orbit convex polytopes exist in $d \geq 4$ dimensions,
the facets and vertex figures must, in fact, be regular;
but then $\sT$ itself is regular \cite[129]{coxeter1973regular}.

\section{Conclusion}
The number of half-regular convex polytopes and tilings (to use Conway's pleasant term) is perhaps surprisingly small.
Those which are also combinatorially two-orbit
are simply the cuboctahedron and the icosidodecahedron,
the only two quasiregular polyhedra, and their duals;
the trihexagonal tiling, the only quasiregular plane tiling,
and its dual;
and the tetrahedral-octahedral honeycomb,
the only quasiregular honeycomb,
and its dual.
It is notable, perhaps,
that although duality is not generally well-defined for tilings,
it always works well for two-orbit tilings which are combinatorially two-orbit,
just as it always works well for regular tilings and uniform plane tilings.
However, it does not generally work out for two-orbit tilings which 
are combinatorially regular!

The above seems suggestive that ``quasiregular,''
which has previously had rather ad-hoc definitions,
could be taken to mean ``facet-intransitive two-orbit.''
\textcite[18]{coxeter1973regular} 
defines
a ``quasi-regular polyhedron'' as ``having regular faces,
while its vertex figures, though not regular, are cyclic and
equi-angular (i.e., inscriptible in circles and alternate-sided).''
The definition of a quasiregular plane tiling does not seem
to be clearly stated,
but the implication (in \cite[\S 4.2]{coxeter1973regular}) is that
a quasiregular plane tiling is one
formed, as the quasiregular polyhedra can be,
by truncating the vertices of a regular tiling to the midpoints of the edges.
In \cite[\S 4.7]{coxeter1973regular}, a tiling of 3-space (or honeycomb)
``is said to be quasi-regular if its cells are regular
while its vertex figures are quasi-regular.'' 
This suggests the beginning of an inductive definition
for ``quasiregular'' in higher dimensions,
which would perhaps agree with ours:
Facet-intransitive two-orbit polytopes
have regular facets
and the vertex figures are again facet-intransitive and two-orbit.
It would be good to establish that
having regular facets and facet-intransitive two-orbit vertex figures
implies that the polytope is two-orbit.
This is vacuously true for convex polytopes,
since \textcite{blind1979konvexe} classified all regular-faced
$d$-polytopes with $d \geq 4$,
and none have two-orbit vertex figures.
However, the corresponding result for abstract polytopes would clarify the agreement
of the definitions.


The word ``quasiregular'' is also applied to some star polytopes,
such as the dodecadodecahedron $\vsch{5}{5/2}$
and the great icosidodecahedron $\vsch{3}{5/2}$ in \cite[100--101]{coxeter1973regular};
three ditrigonal forms: the ditrigonal dodecadodecahedron, 
small ditrigonal icosidodecahedron, and  great ditrigonal icosidodecahedron
(also called ``triambic'' instead of ``ditrigonal'');
and nine hemihedra: 
the tetrahemihexahedron,
octahemioctahedron,
cubohemioctahedron,
small icosihemidodecahedron,
small dodecahemidodecahedron,
great dodecahemicosahedron,
small dodecahemicosahedron,
great dodecahemidodecahedron, and
great icosihemidodecahedron (using names from \cite{wenninger1974polyhedron}).
All of these are two-orbit facet-intransitive. It would be good to establish that
these are the only two-orbit facet-intransitive star polytopes.


Remaining questions include the classification of two-orbit tilings of hyperbolic space,
two-orbit star polytopes,
and other non-convex two-orbit polytopes in Euclidean space.
The general abstract two-orbit polyhedra have been addressed in \cite{hubard2010two}, with extension to higher dimensions in preparation \cite{hubardschultetwo}.
An overview is in \cite[\S 1.3]{helfand2013constructions}.
The important special case of chiral polytopes have been studied extensively
but many open questions remain; a recent survey is \cite{pellicer2012developments}.


It also remains to classify convex polytopes of three or more orbits.
Results in this direction,
mostly for abstract polytopes,
are found in \cite{cunningham2012orbit}, \cite{helfand2013constructions}, and
\cite{orbanic2010map}.

\section{Acknowledgments}
The author would like to thank his adviser, Egon Schulte, for his guidance and assistance,
and suggesting the original problem, and Peter McMullen for suggesting improvements.

\nocite{coxeter1985regular,johnson2000uniform}
\printbibliography
\end{document}